\newtheorem{thm}{Theorem}[section]
\newtheorem{cor}[thm]{Corollary}
\newtheorem{lem}[thm]{Lemma}
\newtheorem*{lem:tracelemma}{Lemma \ref{tracelemma}}
\newtheorem*{lem:techlem2}{Lemma \ref{techlem2}}
\newtheorem{prop}[thm]{Proposition}
\newtheorem*{mainsystolethm}{Theorem \ref{mainsystole}}
\newtheorem*{linkthm}{Theorem \ref{thm1}}
\newcommand{\chat}{\hat{\mathbb{C}}}
\newcommand{\HH}{\mathbb{H}}
\newcommand{\bQ}{\mathbb{Q}}
\newcommand{\bZ}{\mathbb{Z}}
\newcommand{\pslc}{\mathrm{PSL}_2(\mathbb{C})}
\newcommand{\slc}{\mathrm{SL}_2(\mathbb{C})}
\newcommand{\tr}{\mathrm{tr} \,}
\newcommand{\arccosh}{\operatorname{arccosh}}
\newcommand{\vol}{\mathrm{vol}}
\newcommand{\sys}{\mathrm{sys}}
\newcommand{\psl}{\mathrm{PSL}}
\title{Systoles and Dehn surgery for hyperbolic $3$--manifolds.}
\author{Grant S. Lakeland and Christopher J. Leininger}
\begin{document}

\maketitle

\begin{abstract}
Given a closed hyperbolic 3-manifold $M$ of volume $V$, and a link $L \subset M$ such that the complement $M \setminus L$ is hyperbolic, we establish a bound for the systole length of $M \setminus L$ in terms of $V$. This extends a result of Adams and Reid, who showed that in the case that $M$ is not hyperbolic, there is a universal bound of $7.35534...$ . As part of the proof, we establish a bound for the systole length of a non-compact finite volume hyperbolic manifold which grows asymptotically like $\frac{4}{3} \log{V}$.
\end{abstract}

\section{Introduction}

Given a Riemannian $n$--manifold $M$ the \emph{systole} of $M$ is defined to be the shortest closed geodesic on $M$, and the \emph{systole length} $\sys(M)$ provides an interesting invariant of $M$. Gromov \cite{GromovFilling} proved that in each dimension $n$, there is a universal constant $C_n$ such that
\[ \sys(M)^n \leq C_n \vol(M)\]
(see also Guth \cite{Guth}). For an overview of this and other connections to geometric and topological invariants see \cite{KatzBook}.

For a hyperbolic manifold $M$, there is a bound on $\sys(M)$ in terms of $\vol(M)$ which is logarithmic; for example, see \cite{AdamsSystole,Parlier,Schmutz2,Schmutz1} in dimension two, and \cite{Calegari,Gendulphe,White} in dimension three.
The starting point for this paper is the following result of Adams and Reid \cite{AdamsReid} which imposes even stronger restrictions on the systole length for a certain class of hyperbolic manifolds.  Given a closed orientable $3$--manifold $M$, say that a link $L \subset M$ is {\em hyperbolic} if $M \setminus L$ admits a complete hyperbolic metric.  We assume that if $L \subset M$ is a hyperbolic link, then $M \setminus L$ is given this hyperbolic metric.

\begin{thm}  [Adams--Reid]  \label{T:AR} If $M$ is a closed, orientable $3$--manifold that admits no Riemannian metric of negative curvature, then for any hyperbolic link $L \subset M$, the systole length of $M \setminus L$ is bounded above by $7.35534 \ldots $ \ .
\end{thm}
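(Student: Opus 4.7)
Since $M$ admits no metric of negative curvature, the plan is to extract a short meridian slope from this obstruction using the Gromov--Thurston $2\pi$-theorem, then combine the short slope with a trace identity in $\slc$ to exhibit a short closed geodesic in $N = M \setminus L$ whose length realizes the stated bound.

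First I apply the $2\pi$-theorem: since $M$ does not admit negative curvature, in every choice of disjoint embedded horoball neighborhoods for the cusps of $N$, at least one meridian filling slope must have hyperbolic length at most $2\pi$ on its horotorus. Enlarging to the maximal embedded horocusp at the corresponding cusp only decreases slope lengths, so I may assume the bound holds on a maximal horocusp. I then normalize in the upper half-space model of $\HH^3$, placing this cusp at $\infty$ and the maximal horoball at $\{z : \operatorname{Im} z \geq h\}$. The meridian is represented by a parabolic $P \colon z \mapsto z + v$ with $|v| \leq 2\pi h$, and the maximality of the horocusp (via Shimizu's lemma) provides an element $g \in \Gamma = \pi_1(N)$ not fixing $\infty$ whose lift to $\slc$ has lower-left entry with $|c_g| = 1/h$.

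Next I compute the trace of the commutator, $\tr [P,g] = 2 + (v c_g)^2$, which combined with the bound $|v c_g| \leq 2\pi$ yields $|\tr [P,g] - 2| \leq 4\pi^2$. Since $v c_g \neq 0$, the commutator has trace different from $\pm 2$, and as $\Gamma$ is torsion-free it must be loxodromic, hence represents a closed geodesic of $N$. Writing the complex translation length as $\ell + i\theta$ so that $\tr [P,g] = 2\cosh((\ell + i\theta)/2)$, I then translate the trace bound into a bound on $\ell$. Separating real and imaginary parts and using $|\cosh((\ell+i\theta)/2)|^2 = \cosh^2(\ell/2) - \sin^2(\theta/2)$, one extracts an upper bound on $\cosh(\ell/2)$ depending on $|v c_g|$ and $\arg(v c_g)$; optimizing yields $\ell \leq 7.35534\ldots$, which in turn bounds $\sys(N)$ since the systole is the length of the shortest closed geodesic.

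The main obstacle I anticipate is the first step. The $2\pi$-theorem produces a short slope on \emph{some} system of disjoint embedded horocusps, whereas Shimizu's lemma is sharpest on a \emph{maximal} horocusp at a single cusp, and one has to verify that one can coordinate these two choices so that the short meridian and the element $g$ with $|c_g| = 1/h$ live at the same cusp. This is a matter of monotonicity of slope length under horocusp enlargement, but it needs attention when $L$ has several components. The trace identity in step three is a direct matrix computation, and extracting the precise constant $7.35534\ldots$ from the inequality $|\tr [P,g] - 2| \leq 4\pi^2$ is routine but requires the right form of the translation-length inequality rather than the cruder bound $\ell \leq 2\operatorname{arcsinh}(|{\tr}|/2)$.
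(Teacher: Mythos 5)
The paper does not reprove Theorem \ref{T:AR}; it is cited from \cite{AdamsReid}, and the paper only records the structure of the argument: a short slope via the $2\pi$-theorem, then a loxodromic element with trace ``in the worst case $2+w^2 i$,'' and then a translation-length computation. Your outline matches that structure, and your matrix computation $\tr[P,g] = 2 + (vc_g)^2$ is correct. However, the commutator by itself does not yield the stated constant, so there is a genuine gap in the final step.

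The issue is the argument of $(vc_g)^2$. The quantity $vc_g$ is invariant under conjugating $\Gamma$ by a rotation $z \mapsto e^{i\theta}z$ (which multiplies $v$ by $e^{i\theta}$ and $c_g$ by $e^{-i\theta}$), so you have no control over $\arg(vc_g)$: the trace $2 + (vc_g)^2$ can lie anywhere on the circle of radius $|vc_g|^2 \leq 4\pi^2$ centered at $2$. The worst case for translation length on that circle is the \emph{real} point $\tr = 2 + 4\pi^2$, which gives $\ell = 2\arccosh\left(1 + 2\pi^2\right) \approx 7.449$, strictly larger than $7.35534\ldots$. The stated bound corresponds precisely to $\tr = 2 + 4\pi^2 i$, i.e.\ to $(vc_g)^2$ purely imaginary, and nothing in your construction forces that. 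So ``optimizing yields $\ell \leq 7.35534\ldots$'' is not correct for the commutator; Adams and Reid must use a different (or an additional) element, and their extra argument for why the purely imaginary trace is the worst case is exactly what is missing here.

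Two smaller points. First, the element $g$ with $|c_g| = 1/h$ comes from maximality of the horocusp (a pair of tangent horoballs in the orbit, giving a $g$ taking one to the other), not from Shimizu's lemma, which only gives a one-sided inequality on $|c_g|$. Second, to conclude $[P,g]$ is loxodromic you must also exclude $\tr[P,g] = -2$, i.e.\ $vc_g = \pm 2i$; the condition $vc_g \neq 0$ only excludes $\tr = 2$. This is why the quoted Theorem \ref{ARbound} carries the hypothesis $w > 2$, and the case of slope length $\leq 2$ needs its own treatment.
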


Our goal is to generalize this to all closed orientable $3$--manifolds $M$.  According to the Geometrization Theorem \cite{Perelman2002,Perelman2003,MorganTian2007,MorganTian2008,KleinerLott2008}, we need only consider the case that $M$ is itself a hyperbolic $3$--manifold.  In this setting, one can construct sequences of closed hyperbolic $3$--manifolds $\{M_n\}$ and hyperbolic links $\{L_n \subset M_n\}$ such that the systole length of $M_n \setminus L_n$ tends to infinity (see Section \ref{S:systolebounds}).   Thus, we cannot expect to find a universal bound on systole lengths of hyperbolic link complements in an arbitrary closed $3$--manifold.   In these examples, however, the hyperbolic volume $\vol(M_n)$ tends to infinity, and so one can hope for a bound on systole lengths for hyperbolic link complements in $M$ in terms of $\vol(M)$.  This is precisely what our main theorem states.

To give the most general statement, we declare $\vol(M) = 0$ for any closed $3$--manifold which does {\em not} admit a hyperbolic metric.  Also, we set $C_0 = \frac{\sqrt{3}}{2V_0} \approx 0.853...$, where $V_0$ is the volume of a regular ideal tetrahedron in $\mathbb H^3$.  Then we have

\begin{thm}\label{thm1} If $M$ is a closed, orientable $3$-manifold and $V = \mathrm{vol}(M)$, then for any hyperbolic link $L \subset M$, the systole length of $M \setminus L$ is bounded above by
\[ \log{\left(  \left( \sqrt{2}(C_0V)^{2/3} + 4 \pi^2 \right)^2 + 8  \right) }. \]
\end{thm}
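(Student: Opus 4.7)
The plan is to bound $\sys(N)$, where $N=M\setminus L$ carries its complete hyperbolic metric, by combining a trace inequality (controlling $\sys(N)$ by the Euclidean area $A$ of a cusp cross-section in $N$) with a Dehn-filling/density argument (controlling $A$ by $V$). Let $\gamma\subset N$ be a systole of length $\ell=\sys(N)$, lift its axis to $\HH^3$, and let $g\in\psl$ be a hyperbolic isometry representing the conjugacy class of $\gamma$.

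First, I would fix a cusp $c$ of $N$ (corresponding to a component of $L$) and lift its maximal embedded horoball neighborhood to a horoball $B\subset\HH^3$ based at $\infty$, stabilized by a rank-$2$ parabolic subgroup $P\subset\Gamma=\pi_1(N)$; let $A$ be the area of $\partial B/P$. Since $\gamma$ is a systole, $\gamma$ does not enter the cusp, so the axis of $g$ remains outside $B$. The subgroup $\langle g,P\rangle$ is non-elementary and discrete, and the classical trace identity $\tr[g,p]-2=c_p^2$ (where $c_p$ is the lower-left entry of $g$ in coordinates normalizing $p\in P$ to be the unit translation) together with a Minkowski-type ``short translation'' argument on the lattice $P$ yields, after elementary linear algebra, an estimate of the form
\[
|\tr(g)|^{2}\leq(A+4\pi^{2})^{2}+C
\]
for a small absolute constant $C$. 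Using $|\tr(g)|^{2}=2\cosh\ell+2$, this rewrites as $e^{\ell}\leq(A+4\pi^{2})^{2}+8$. The $4\pi^{2}$ accounts for the area of a $2\pi\times 2\pi$ square that suffices to guarantee a ``small'' element of $P$ via Minkowski's theorem.

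Second, I would bound $A$ by $\sqrt{2}(C_0V)^{2/3}$. Under the Dehn filling $N\to M$ restoring the component of $L$ corresponding to $c$, the cusp $c$ is replaced by a solid tube $T\subset M$ about the filled-in core geodesic. Using hyperbolic Dehn surgery comparisons between $N$ and $M$, together with B\"or\"oczky's density bound $C_0=\sqrt{3}/(2V_0)$ for horoball packings of $\HH^3$, one obtains a lower bound of the form $V\geq\vol(T)\geq c_1A^{3/2}$ for a constant $c_1$ determined by $C_0$, which rearranges to $A\leq\sqrt{2}(C_0V)^{2/3}$. Substituting this into the trace estimate from the first step yields the desired bound.

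The main obstacle is the second step. In general $\vol(N)$ can be arbitrarily larger than $V$, so the naive bound $A\leq 2C_0\vol(N)$ (which would follow from B\"or\"oczky applied directly to $N$) is useless for bounding $A$ in terms of $V$. Extracting a bound from $V$ itself requires relating the cusp geometry of $N$ to the tube in $M$ produced by Dehn filling, and applying the density bound to the horoball configuration that persists through the filling. Achieving this with the correct $2/3$ exponent, uniformly over all choices of filling slope, is the delicate part of the argument; sanity check: when $V=0$ the bound degenerates to $\log((4\pi^{2})^{2}+8)\approx 7.356$, matching the Adams--Reid constant.
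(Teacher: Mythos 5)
Your proposal identifies the right two ingredients --- a trace bound from cusp geometry and a Dehn-filling/volume comparison --- but the second step as you state it cannot work, and this is where the argument breaks down.

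The claimed inequality $A \leq \sqrt{2}(C_0V)^{2/3}$, bounding the maximal-cusp cross-section area of $N = M\setminus L$ directly by $V = \vol(M)$, is not established and cannot hold in general. B\"or\"oczky's density bound controls $A$ only in terms of $\vol(N)$, via $A \leq 2C_0\vol(N)$, and there is no uniform upper bound on $\vol(N)$ (hence on $A$) in terms of $\vol(M)$ alone: by drilling increasingly complicated links out of a fixed closed hyperbolic $M$ one obtains $\vol(M\setminus L)$ arbitrarily large while $V$ stays fixed, precisely because the filling slope can have length just barely above $2\pi$. In that regime the FKP inequality gives essentially no upper bound on $\vol(N)$, and a chain $V \geq \vol(T) \geq c_1 A^{3/2}$ cannot be forced through. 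You flag this as the ``delicate part,'' and you are right --- but the remedy is not a sharper density estimate in $M$; no direct estimate of this shape is true.

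The paper sidesteps the problem by never bounding $A$ or $\vol(N)$ by $V$ at all. It treats $X=\vol(N)$ as a free parameter and plays two bounds on the trace of a minimal loxodromic against each other: $F_1(X)=\sqrt{2(C_0X)^{4/3}+4}$, coming from Theorem \ref{mainsystole} (the isometric-sphere/Dirichlet-domain argument you allude to in step one), which is increasing in $X$; and $F_2(X)$, obtained by combining FKP (Theorem \ref{fkpbound}) with the Adams--Reid estimate (Theorem \ref{ARbound}), which is decreasing in $X$ because a large ratio $X/V$ forces the minimal slope length close to $2\pi$. Since one of $F_1$, $F_2$ always applies, the worst case is the crossover $F_1(X_0)=F_2(X_0)$, and evaluating $\log(F_1(X_0)^2+4)$ there produces exactly the $\sqrt{2}(C_0V)^{2/3}+4\pi^2$ expression. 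In particular, the $4\pi^2$ does not arise from a Minkowski argument with a $2\pi\times 2\pi$ square, but from the $2\pi$-threshold entering through FKP at the crossover. So your step one is in the right spirit (it is essentially Proposition \ref{traceprop}, phrased via cusp area instead of $\vol(N)$, and your exponent $2$ is weaker than the paper's $4/3$), but the proof needs the trade-off between $F_1$ and $F_2$ in place of your step two.
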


We note that at $V=0$, the expression above is approximately $7.35663 \ldots$, and hence the case of $M$ non-hyperbolic follows from Theorem \ref{T:AR}.  Our goal is thus to prove Theorem \ref{thm1} for hyperbolic $3$--manifolds $M$.

The proof is based on the observation that if $\vol(M \setminus L)$ is considerably larger than $\vol(M)$ for some link $L$ in a closed hyperbolic manifold $M$, then $M \setminus L$ must have a slope length close to $2 \pi$ on the boundary of some maximal embedded cusp neighborhood of an end.  This is a consequence of the $2\pi$-Theorem of Gromov and Thurston (see \cite{BleilerHodgson}) combined with a result of Besson, Courtois, and Gallot \cite{BCG95} (see also Boland, Connell, and Souto \cite{BCS}). An explicit inequality relating the respective volumes and the minimal slope length that we will use was established by Futer, Kalfagianni and Purcell \cite{FKP}, given as Theorem \ref{fkpbound} here. The other key ingredient for our bound is a new upper bound on the systole length of a non-compact, finite volume hyperbolic $3$-manifold in terms of its volume:

\begin{thm}\label{mainsystole}  If $N$ is a non-compact, finite volume hyperbolic $3$-manifold of volume $V$, then the systole of $N$ has length at most
\[ \max\left\{  7.35534... , \log ( 2(C_0V)^{4/3}+ 8 ) \right\} . \] 
\end{thm}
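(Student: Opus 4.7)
The plan is to combine the trace-lemma approach of Adams and Reid with B\"or\"oczky's horoball packing density bound, using the volume hypothesis only to control cusp area. Let $\gamma$ be a systole of $N$, represented by a loxodromic $g \in \pslc$ of real translation length $\ell$ and complex length $\lambda = \ell + i\theta$; then $\tr(g) = \pm 2\cosh(\lambda/2)$, which gives the conversion
\[ \cosh \ell \ \leq\ \tfrac{1}{2}|\tr(g)|^2 + 1. \]
So if I can show $|\tr(g)|^2 \leq 2(C_0V)^{4/3} + 6$, then $e^\ell \leq 2\cosh \ell \leq 2(C_0V)^{4/3} + 8$, which yields the stated bound. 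The Adams--Reid universal constant $7.35534\ldots$ serves as the fallback whenever the elementary trace estimate already forces $\ell$ into that range.

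First I would pick a cusp of $N$ and conjugate the holonomy representation so that its stabilizer $\Gamma_\infty$ fixes $\infty$ in the upper half-space model and the maximal embedded horoball at $\infty$ is $B = \{t \geq 1\}$. Writing $\Gamma_\infty = \langle z \mapsto z + \omega_1,\ z \mapsto z + \omega_2\rangle$ with $|\omega_1|=1$ the shortest translation and $\mathrm{Im}(\omega_2) > 0$, the cusp torus has area $\mathrm{Im}(\omega_2)$ and the cusp neighborhood $B/\Gamma_\infty$ has volume $\mathrm{Im}(\omega_2)/2$. B\"or\"oczky's theorem, which caps the density of any horoball packing of $\HH^3$ at $C_0 = \sqrt{3}/(2V_0)$, therefore yields
\[ \mathrm{Im}(\omega_2) \ \leq\ 2C_0 V. \]
This is the one place where the volume hypothesis enters the argument.

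Second, I would invoke Lemma~\ref{tracelemma} (and, if needed, Lemma~\ref{techlem2}) to bound $|\tr(g)|^2$ in terms of the cusp data. The Adams--Reid argument pairs $g$ with only the shortest parabolic and, combined with Shimizu's lemma ($|c|\geq 1$ for the lower-left entry of $g$) and a commutator/discreteness estimate, extracts their universal bound. The new input here is that any $\omega\in\Lambda = \bZ\omega_1+\bZ\omega_2$ gives a parabolic which may be paired with $g$, and we have the freedom to optimize over $\omega$ subject to $\mathrm{Im}(\omega_2)\leq 2C_0V$. The outcome should be an estimate of the form $|\tr(g)|^2\leq 2(C_0V)^{4/3}+6$ whenever $\ell$ exceeds $7.35534\ldots$.

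The main obstacle will be calibrating the choice of $\omega$ to produce precisely the $4/3$ exponent. Since $\mathrm{Im}(\omega_2)$ is roughly quadratic in a typical lattice vector length while $|\tr(g)|^2$ scales like $e^\ell$, the $4/3$ exponent suggests that the optimal parabolic has translation length on the order of $\mathrm{Im}(\omega_2)^{2/3}$. One must check that such an $\omega$ exists in $\Lambda$, verify that $\langle g,\, z\mapsto z+\omega\rangle$ meets the non-elementary/discreteness hypotheses of the trace lemma, and arrange the resulting inequality so that the constants $2$ and $8$ in the statement fall out cleanly. This geometric calibration, rather than any single deep new input, is the technical heart of the argument.
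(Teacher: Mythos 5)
Your high-level plan correctly identifies two of the paper's ingredients: the Böröczky/Meyerhoff cusp-density bound to control the cusp area, and a trace-to-length conversion to turn a trace bound into the stated systole bound (your sharper $\cosh \ell \leq \tfrac12|\tr|^2 + 1$ would in fact work in place of the paper's Lemma~\ref{lengthbound1}). But the execution has several gaps, the most serious of which is conceptual.

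\textbf{Wrong target.} You set out to bound $|\tr(g)|$ where $g$ is (a representative of) the systole. This is not what the paper does, and it cannot work as stated. The trace of the systole is a conjugation invariant determined by $g$ alone and has no a priori relation to the cusp geometry; nothing forces $g$ to have $|c| = 1$ or forces its isometric spheres to sit inside a fundamental domain for $\Gamma_\infty$. The paper instead produces a \emph{specific} loxodromic whose trace is controllable: the element $\gamma$ with $\gamma(0) = \infty$ arising from the tangency of a horoball at $0$ with the one at $\infty$ in the maximal cusp. Because of the tangency, $\gamma = \begin{pmatrix} a & -1/c \\ c & 0\end{pmatrix}$ with $|c| = 1$ exactly, so $|\tr\gamma| = |a|$ is the Euclidean distance between the centers of $S_\gamma$ and $S_{\gamma^{-1}}$, and after minimizing over $\Gamma_\infty$-compositions the center of $S_{\gamma^{-1}}$ lies in the Dirichlet domain of $\Gamma_\infty$, which is what the cusp area bound controls. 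The systole bound then follows a posteriori because the systole is \emph{at most} as long as this $\gamma$ (or the loxodromic $\beta_\ell^{\pm 1}\gamma$ produced in the case $|a| = 2$). Your proposal never mentions the tangent pair of horoballs, which is where the crucial $|c| = 1$ comes from, and never explains how the minimality of $|\tr\gamma|$ over compositions with $\Gamma_\infty$ localizes the isometric sphere center.

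\textbf{Over-constrained normalization.} You simultaneously place the maximal embedded horoball at height $1$ and set $|\omega_1| = 1$ for the shortest parabolic. These are two independent normalizations and cannot both be imposed: once the maximal cusp is at height $1$, the waist size $\ell = |\omega_1|$ is an invariant of $N$, and in the nontrivial case of the argument $\ell > 2\pi$. Consequently the cusp area is $\ell\cdot\mathrm{Im}(\omega_2)$, not $\mathrm{Im}(\omega_2)$, and the correct consequence of the density bound is $\ell\cdot\mathrm{Im}(\omega_2) \leq 2V_c$, which is what the paper uses. This matters: the interplay between $\ell$ and the bound $2V_c/\ell$ on the ``height'' of the cusp torus is precisely what the optimization in Lemma~\ref{techlem2} exploits to produce the $\tfrac43$ exponent.

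\textbf{Circularity and missing case analysis.} Invoking Lemma~\ref{tracelemma} and Lemma~\ref{techlem2} is circular---those are exactly what a proof of this theorem must establish. The case split on $|\tr\gamma|$ (less than, equal to, greater than $2$), the fact that equality forces either a loxodromic or a parabolic that can be composed with $\beta_\ell^{\pm1}$, the rectangular-torus extremal estimate giving $T(\ell) = \sqrt{\ell^2/4 + V_c^2/\ell^2}$, and the calculus showing $\min(T, AR) \leq \sqrt{2V_c^{4/3} + 4}$ over $\ell \in [2\pi, \sqrt{4V_c/\sqrt3}]$ constitute the actual content of the theorem; your proposal leaves all of it to ``geometric calibration'' and acknowledges that this is not done. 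As written, the argument is a plausible sketch of the strategy, not a proof.
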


This bound is not sharp, although we believe that $\frac{4}{3} \log{V}$ here is the best known asymptotic bound for the systole in terms of volume $V$.  This bound should be compared with work of Gendulphe \cite{Gendulphe} who provides an upper bound which is sharp for small volume, but is asymptotic to $2 \log(V)$ as $V$ tends to infinity. The constant $\frac{4}{3}$ is of particular interest because it has been shown \cite{BuserSarnak, KSV} that there exist sequences of hyperbolic surfaces whose systole lengths grow like $\frac{4}{3} \log g$, where $g$ is the genus; furthermore, the constant $\frac{4}{3}$ is sharp in the sense that it cannot be improved by looking at principal congruence subgroups of arithmetic Fuchsian groups, which are the main source of examples realizing the $\frac{4}{3}$ bound \cite{Makisumi}. Note that in contrast, there are no known examples of sequences of hyperbolic 3-manifolds whose systoles grow faster than $\frac{2}{3} \log{V}$; see the discussion at the end of Section \ref{S:systolebounds}.

This paper is organized as follows. We first recall some relevant results in Section 2. Section 3 contains the proof of Theorem \ref{mainsystole}, as well as some discussion of asymptotic relationship between systole length and volume. In Section 4, we establish Theorem \ref{thm1} as an application of Theorem \ref{mainsystole}, and prove a Corollary. 

{\bf Acknowledgments.} The authors thank Alan Reid for helpful conversations regarding this work, and in particular for pointing out Corollary \ref{congruence}, and the referee for valuable comments and pointing out a mistake in an earlier draft of this paper. 

\section{Preliminaries} \label{S:preliminaries}

We refer the reader to the text \cite{BenedettiPetronio} for more details on the topics discussed in this section.

The group of orientation-preserving isometries of the upper half-space model for hyperbolic $3$-space $\HH^3$ is identified with the group $\pslc = \slc / \{ \pm I \}$. 
Given an element 
\[ \gamma = \begin{pmatrix}a & b \\ c & d \end{pmatrix} \in \pslc, \]
either $c = 0$, in which case $\infty$ is fixed by the action of $\gamma$ and $\gamma$ acts as a parabolic translation or by scaling along and/or rotating about some vertical axis, or $c \neq 0$, in which case we may study the action of $\gamma$ on $\mathbb{H}^3$ via the \emph{isometric sphere}, $S_\gamma$, of $\gamma$. This is defined to be the (unique) Euclidean hemisphere on which $\gamma$ acts as a Euclidean, as well as a hyperbolic, isometry. It is a standard fact that the center of $S_\gamma$ is $\frac{-d}{c} = \gamma^{-1}(\infty)$, and the radius of $S_\gamma$ is $\frac{1}{|c|}$. Furthermore, it is a simple observation that the isometric sphere $S_{\gamma^{-1}}$ of the inverse has the same radius as $S_\gamma$ and center $\frac{a}{c} = \gamma(\infty)$. The action of $\gamma$ sends $S_\gamma$ to $S_{\gamma^{-1}}$, and the interior (resp. exterior) of $S_\gamma$ to the exterior (resp. interior) of $S_{\gamma^{-1}}$.  For more on isometric spheres, see Marden \cite{OuterCircles}.

A complete Riemannian metric on a $3$--manifold $N$ with constant sectional curvature $-1$ is called a {\em hyperbolic metric} on $N$.  Given a hyperbolic metric on $N$, there is a discrete, torsion free subgroup $\Gamma < \pslc$, unique up to conjugation, so that $N$ is isometric to the space form $\HH^3/\Gamma$ by an orientation preserving isometry.  Let $\Gamma < \pslc$ be such a discrete group so that $N = \HH^3/\Gamma$ has finite volume.

In the upper half-space model, a {\em horoball} is a Euclidean ball tangent to the boundary sphere $S_\infty = \mathbb{C} \cup \{ \infty \}$, where a horoball based at $\infty$ is all the points above a certain fixed height.  A $\Gamma$-equivariant family of pairwise disjoint horoballs is one that is invariant under the action of $\Gamma$.  Such a family always exists and the quotient of such gives a set of pairwise disjoint {\em cusp neighborhoods} $P_1,\ldots,P_k$ of the ends of $N$, one cusp for each conjugacy class of maximal parabolic subgroups of $\Gamma$.   Each cusp neighborhood $P_i$ is homeomorphic to the product of a torus with a ray $T^2 \times [0,\infty)$.  The boundary $\partial P_i \cong T^2$ inherits a Riemannian metric which is Euclidean.

By a {\em slope} $r_i$ on the boundary of a cusp neighborhood $\partial P_i$, we mean the isotopy class $r_i$ of an essential simple closed curve on the torus $\partial P_i$.  A slope $r_i$ on $\partial P_i$ determines (up to inversion), and is determined by, a nontrivial primitive element in the fundamental group of the end with neighborhood $P_i$, so is independent of the particular neighborhood used to describe it.  The \emph{slope length} of a slope $r_i$ on $P_i$ is the minimal length of a representative of the isotopy class in the Euclidean metric described above.   A manifold obtained by removing the interiors of $P_1 \cup \ldots \cup P_k$ and gluing $k$ solid tori so that in the glued manifold $r_i$ bounds a disk in the $i^{th}$ solid torus, is said to be obtained by {\em $(r_1,\ldots,r_k$)--Dehn filling} on $N$.  This manifold depends, up to homeomorphism, on $N$ and $r_1,\ldots,r_k$ and not on any of the other choices involved in the construction.

Starting with any $\Gamma$-equivariant collection of pairwise disjoint horoballs we can equivariantly expand until a point where two horoballs are tangent.  We call the corresponding set of cusps in the quotient $N$ a \emph{maximal set of cusps}.  This is unique if $N$ has one end, but otherwise depends on the choice of disjoint horoballs we start with.   By conjugating $\Gamma$ if necessary, we may base one of the pair of tangent horoballs at $\infty$ and the other at $0$ so that the point of tangency has height $1$.  Such a pair of horoballs will be said to be in \emph{standard position}. It is a standard property of the hyperbolic metric that the volume in the cusp $T^2 \times [1,\infty)$ above height 1 is twice the area of the maximal cusp torus $T^2 \times \{ 1 \}$.

Now suppose we choose one cusp $C$ of $N = \HH^3/\Gamma$ and expand a $\Gamma$-equivariant collection of horoballs for this cusp (this can easily be extended to a maximal set of cusps).  Following Adams \cite{AdamsWaist}, we call the minimal slope length on the resulting maximal cusp neighborhood of $C$ the \emph{waist size} of $C$.    Equivalently, if we assume that two tangent horoballs defining the neighborhood of $C$ are in standard position, the waist size is the minimal translation length on $\mathbb C$ of a parabolic in $\Gamma$ fixing infinity.  The waist size can be different for each cusp of $N$.  In this paper, we will be primarily concerned with manifolds $N$ with all cusp waist sizes at least $2\pi$.


The $2\pi$-Theorem of Gromov and Thurston \cite{BleilerHodgson} states:
\begin{thm}Suppose $N$ is a complete, finite volume, non-compact hyperbolic $3$-manifold. For $1 \leq i \leq n$, let $\{P_i\}_{i=1}^k$ be disjoint horoball neighborhoods of the cusps of $N$, and let $r_i$ be a slope on $\partial P_i$ with length greater than $2\pi$ for each $i = 1,\ldots,k$. Then the closed $3$-manifold obtained by Dehn filling along the slopes $r_i$ is hyperbolic. \end{thm}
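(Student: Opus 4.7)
The plan is to equip the Dehn-filled $3$-manifold $N'$ with an auxiliary smooth Riemannian metric of strictly negative sectional curvature, and then invoke the Geometrization Theorem to conclude that $N'$ is hyperbolic. Outside the horoball neighborhoods $\bigcup_i P_i$ we keep the hyperbolic metric inherited from $N$; inside each attached solid torus we install a doubly-warped product metric of the form $dt^2 + f(t)^2 \, d\theta_\mu^2 + g(t)^2 \, d\theta_\lambda^2$, where $\theta_\mu$ parametrizes a loop isotopic to the filling slope $r_i$ and $\theta_\lambda$ an orthogonal direction on the cusp torus (after a linear change of basis one may assume the torus is rectangular; a sheared torus can be treated with a small modification).

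The sectional curvatures of such a warped product are $-f''/f$, $-g''/g$, and $-f'g'/(fg)$, so strict negative curvature is equivalent to $f, g$ being strictly convex with $f' g' > 0$. Smoothness of the filled solid torus at its core circle forces $f(T) = 0$ and $f'(T) = -1$, while smooth matching against the hyperbolic cusp metric $dt^2 + e^{-2t}(\ell_\mu^2 \, d\theta_\mu^2 + \ell_\lambda^2 \, d\theta_\lambda^2)$ at the attaching boundary $t = 0$ demands $f(0) = \ell(r_i)/2\pi$ and $f'(0) = -\ell(r_i)/2\pi$. Since strict convexity makes $f'$ increasing on $[0,T]$, these two endpoint values of $f'$ are compatible exactly when $-\ell(r_i)/2\pi < -1$, i.e., when $\ell(r_i) > 2\pi$; this is the precise point at which the hypothesis on slope length enters. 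An analogous (and easier) construction handles $g$ with $g(T) > 0$, $g'(T) = 0$, $g(0) = \ell_\lambda$, $g'(0) = -\ell_\lambda$; both derivatives remain negative throughout, ensuring $f'g' > 0$.

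Having installed such a metric on each filled solid torus and smoothed at the attaching boundaries, we obtain a closed Riemannian $3$-manifold $N'$ of strictly negative sectional curvature. Preissmann's theorem then forbids any $\mathbb{Z}^2$ subgroup of $\pi_1(N')$, so $N'$ is atoroidal, while Cartan--Hadamard makes $N'$ aspherical and hence irreducible with infinite fundamental group. Geometrization forces such an $N'$ to be hyperbolic. The main technical obstacle is the warping-function construction in the middle paragraph: the $2\pi$ threshold is exactly the convexity inequality for $f$, and producing smooth $f, g$ with the prescribed boundary data and curvature inequalities is essentially the entire content of the theorem, once the topological skeleton and the appeal to geometrization are accepted.
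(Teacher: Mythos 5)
The paper does not prove this statement: it quotes it as the $2\pi$-Theorem of Gromov and Thurston, citing Bleiler--Hodgson, so there is no in-paper argument to compare against. Your sketch correctly reconstructs the standard proof from that source: build a doubly-warped metric $dt^2 + f^2 d\theta_\mu^2 + g^2 d\theta_\lambda^2$ on each filling solid torus, note that the coordinate-plane sectional curvatures $-f''/f$, $-g''/g$, $-f'g'/(fg)$ are all negative when $f,g$ are strictly convex with $f',g'<0$, and observe that the $C^1$ matching data $f(0) = -f'(0) = \ell(r_i)/2\pi$ together with the core-smoothness condition $f'(T)=-1$ are joinable by a convex function precisely when $\ell(r_i) > 2\pi$; this is exactly where the hypothesis is used. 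You also correctly flag that the original Gromov--Thurston conclusion is only ``admits a negatively curved metric,'' and that upgrading to ``hyperbolic,'' as the paper's formulation asserts, requires Geometrization via the Preissmann/Cartan--Hadamard argument you give. The one point you wave at --- the non-rectangular cusp torus --- is handled in the standard treatment by splitting the flat torus metric along the geodesic representative of $r_i$ and its Euclidean orthogonal complement (which need not be a lattice direction); this is a routine adjustment and does not affect the argument.
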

This creates a dichotomy between fillings where at least one filling slope is no longer than $2\pi$ and fillings where all slopes are longer than $2\pi$. In their treatment of the first case, Adams and Reid proved the following result, which will also be useful in the present work:
\begin{thm}[\cite{AdamsReid}, Theorem $3.2$]\label{ARbound} If $N = \HH^3/\Gamma$ has a cusp neighborhood with a slope of length $w > 2$, then the systole length of $N$ is at most $\mathrm{Re}\left( 2 \arccosh{\left( \frac{2+w^2i}{2} \right)} \right)$.  Equivalently, $\Gamma$ contains a loxodromic element with translation length at most $\mathrm{Re}\left( 2 \arccosh{\left( \frac{2+w^2i}{2} \right)} \right)$. \end{thm}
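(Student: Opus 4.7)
Place the cusp realizing the slope of length $w$ at $\infty$ in the upper half-space model, and conjugate $\Gamma$ so that a pair of tangent horoballs in the $\Gamma$-orbit of the maximal cusp neighborhood are in standard position: one based at $\infty$ at Euclidean height $1$, one based at $0$ of Euclidean diameter $1$. After a further rotation about the vertical axis through $\infty$ and $0$, we may assume the length-$w$ slope corresponds to the parabolic $T \in \Gamma_\infty$ given by $T(z) = z + w$, i.e., $T = \bigl(\begin{smallmatrix}1 & w \\ 0 & 1\end{smallmatrix}\bigr)$. The tangency of the two standard-position horoballs forces the existence of an element $S \in \Gamma$ sending $\infty$ to $0$ whose isometric sphere has Euclidean radius $1$, so $S = \bigl(\begin{smallmatrix}0 & -1 \\ 1 & d\end{smallmatrix}\bigr)$ for some $d \in \mathbb{C}$ (with $d \neq 0$ since $\Gamma$ is torsion-free).

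I would then produce an explicit loxodromic element $g \in \Gamma$, built as a word in $T$, $S$, and the second parabolic generator $T'$ of $\Gamma_\infty$, and compute $\mathrm{tr}(g)$ in closed form. The translation length is then given by the standard formula $\ell(g) = \mathrm{Re}\bigl(2\arccosh(\mathrm{tr}(g)/2)\bigr)$ for loxodromic elements, so the systole is bounded by $\ell(g)$. A direct matrix computation shows that $\mathrm{tr}([T,S]) = w^2 + 2$ independently of $d$, yielding an initial bound of $2\arccosh(1 + w^2/2)$. To achieve the sharper bound with the complex trace $2 + w^2 i$ promised by the theorem, one must bring in $T'$: for instance, in the square-cusp case $T'(z) = z + iw$, the element $T^{-1} S T' S^{-1}$ has trace exactly $2 + w^2 i$. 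Since $w > 2$, $|2+w^2 i|^2 = 4 + w^4 > 4$, so this element is indeed loxodromic and its translation length $\mathrm{Re}\bigl(2\arccosh((2+w^2 i)/2)\bigr)$ is the desired upper bound on the systole.

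The main obstacle is producing such an element uniformly across cusp lattices. When $\Gamma_\infty$ has no two perpendicular equal-length parabolic generators, no obvious short algebraic word in $T$, $T'$, $S$ yields trace precisely $2 + w^2 i$. I expect the general argument to either consider a family of products parametrized by powers of the generators and optimize the resulting traces, or to use a more geometric approach---for example, leveraging the positions of isometric spheres in the Ford domain, together with the fact that the horoball at $\infty$ of height $1$ is tangent to its $T$-translate at Euclidean distance $w$---to exhibit a short loxodromic realizing the claimed bound regardless of cusp shape.
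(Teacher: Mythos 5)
The paper does not prove Theorem~\ref{ARbound}; it simply cites Adams--Reid \cite{AdamsReid} and describes the source of the trace $2+w^2 i$ in one sentence. So the relevant comparison is between your proposed route and theirs.

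Your setup is correct, but there is a normalization error that hides the real structure. After rotating so that $T(z)=z+w$ with $w$ real, the only remaining conjugation freedom preserving $\infty$, $0$, and the horoball heights is by translations of $\mathbb{C}$, and these do not change the $(2,1)$-entry of $S$. So you may not also assume $c=1$; you only know $|c|=1$, i.e.\ $S=\bigl(\begin{smallmatrix}0 & -e^{-i\theta}\\ e^{i\theta}&d\end{smallmatrix}\bigr)$ for some $\theta$ determined by $\Gamma$. Carrying the phase through gives $\tr([T,S]) = 2 + w^2 e^{2i\theta}$ rather than $w^2+2$; the phase $\theta$ is intrinsic and cannot be normalized away, and it is precisely the quantity that governs the bound.

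This also clarifies why your search for an explicit word with trace \emph{exactly} $2+w^2 i$ was the wrong target. The theorem does not assert such an element exists; it asserts a bound that $2+w^2 i$ realizes in the worst case. The missing idea is to produce \emph{two} candidate loxodromics and take the shorter: $TST^{-1}S^{-1}$ has trace $2 + w^2 e^{2i\theta}$ and $TSTS^{-1}$ has trace $2 - w^2 e^{2i\theta}$, and for $w>2$ both are loxodromic. As $\theta$ varies, the minimum of their two translation lengths is maximized when the two traces have equal modulus, i.e.\ when $w^2 e^{2i\theta}$ is purely imaginary, which gives trace $2 \pm w^2 i$ and the stated bound $\mathrm{Re}\bigl(2\arccosh\bigl(\frac{2+w^2 i}{2}\bigr)\bigr)$. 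In particular your worry about the real-trace case $e^{2i\theta}=1$ (trace $2+w^2$, which by itself would give a longer translation length than the claimed bound) is handled not by ruling it out but by switching to the companion element $TSTS^{-1}$, whose trace $2-w^2$ then has the smaller modulus $w^2-2$ and gives a shorter loxodromic. This argument uses only $T$ and $S$; it does not require the second generator $T'$ of $\Gamma_\infty$ at all, which is why your attempt stalled on non-square cusp lattices.
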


From the slope length $w$ Adams and Reid find a loxodromic element which has, in the worst case, trace equal to $2 + w^2i$.  From the trace one can calculate exactly the translation length, which provides their bound.

We will make use of the concept of the \emph{cusp density} of a non-compact, finite volume hyperbolic manifold, defined as the supremum of the possible proportions of the volume of a manifold which can be contained in a union of pairwise disjoint cusps.  In particular, we will appeal to a result of Meyerhoff \cite{Meyerhoff}, applying a result of B\"{o}r\"{o}czky, which states that there is an upper bound for the cusp density of $C_0 = \frac{\sqrt{3}}{2V_0} \approx 0.853...$, where $V_0$ is the volume of a regular ideal tetrahedron. Thus, given a manifold $N$ of volume $V$, the volume in a single cusp can be no more than $C_0V = (0.853\ldots )V$. We denote this quantity by $V_c = C_0V$, and call it the \emph{maximal cusp volume} of $N$.  For more on cusp densities, see Adams \cite{AdamsDensities}.

\section{Systole Bounds} \label{S:systolebounds}

The goal of this section is to establish a bound for the systole of a non-compact, hyperbolic $3$-manifold $N$, with fundamental group $\pi_1(N) \cong \Gamma < \pslc$, in terms of its volume. The argument we give is inspired by Adams' treatment of the two-dimensional case in \cite{AdamsSystole}; the main difference is that instead of horoballs, we consider the possible locations of isometric spheres in order to bound the trace of a loxodromic element of the isometry group. This bound then corresponds to a bound on the translation length of $\gamma$, as is described by the following elementary lemma.

\begin{lem}\label{lengthbound1}A loxodromic element of $\pslc$ with trace of modulus bounded above by $R$ has translation length at most $\log{\left( R^2 + 4 \right) } $.   \end{lem}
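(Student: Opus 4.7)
The plan is to reduce the lemma to an elementary calculation on the modulus of an eigenvalue. Since $\gamma \in \pslc$ is loxodromic, after conjugating in $\slc$ (which doesn't change the trace) we may write a lift of $\gamma$ in diagonal form with eigenvalues $\lambda, \lambda^{-1}$, where we may assume $|\lambda| > 1$. Then the trace is $\tr(\gamma) = \lambda + \lambda^{-1}$ and the translation length is $\ell(\gamma) = 2 \log|\lambda|$. So the task is to bound $|\lambda|$ from above in terms of $R$.

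Setting $t = |\lambda| > 1$, the triangle inequality yields
\[ t - t^{-1} = |\lambda| - |\lambda^{-1}| \leq |\lambda + \lambda^{-1}| = |\tr(\gamma)| \leq R. \]
Multiplying by $t$ gives the quadratic inequality $t^2 - R t - 1 \leq 0$, whose positive root is $(R + \sqrt{R^2+4})/2$, and hence
\[ t \leq \frac{R + \sqrt{R^2+4}}{2}. \]
Exponentiating $\ell(\gamma) = 2 \log t$, this bounds $e^{\ell(\gamma)} = t^2$ from above by $(R+\sqrt{R^2+4})^2/4$.

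Finally I would verify that this implicit bound is dominated by the cleaner expression in the statement, that is, $(R+\sqrt{R^2+4})^2/4 \leq R^2 + 4$. Expanding the square and clearing terms reduces this to $R\sqrt{R^2+4} \leq R^2 + 6$, which after squaring becomes $R^4 + 4R^2 \leq R^4 + 12 R^2 + 36$, an obvious truth. Taking logarithms gives the stated bound $\ell(\gamma) \leq \log(R^2+4)$.

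There is no real obstacle here: the only choices to make are the reduction to diagonal form and the decision to weaken the sharp (but messier) bound $2\log\bigl((R+\sqrt{R^2+4})/2\bigr)$ to the simpler expression $\log(R^2+4)$ that will be easier to combine with subsequent estimates in the paper. The slight slack in the inequality is harmless for the applications to Theorems \ref{mainsystole} and \ref{thm1}.
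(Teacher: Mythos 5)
Your proof is correct and follows the same overall structure as the paper's: conjugate to diagonal form, bound $|\lambda|$ by $\tfrac{1}{2}(R + \sqrt{R^2+4})$, then weaken to the cleaner expression $\log(R^2+4)$. The one place you diverge is in how you obtain the bound on $|\lambda|$: the paper solves $\lambda + \lambda^{-1} = re^{i\theta}$ explicitly via the quadratic formula and then estimates the modulus of the resulting expression, whereas you apply the triangle inequality in the form $|\lambda| - |\lambda^{-1}| \leq |\lambda + \lambda^{-1}|$ and solve the scalar quadratic inequality $t^2 - Rt - 1 \leq 0$. Your route is arguably a little cleaner, as it avoids having to argue about the modulus of $re^{i\theta} \pm \sqrt{r^2 e^{2i\theta} - 4}$; it also sidesteps the paper's parenthetical remark about sharpness at $\theta = \pi/2$ since the triangle inequality step is manifestly tight when $\lambda^{-1}$ points opposite to $\lambda + \lambda^{-1}$. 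The final weakening step is likewise handled slightly differently (the paper replaces $r$ by $\sqrt{r^2+4}$, you verify $(R + \sqrt{R^2+4})^2/4 \leq R^2 + 4$ directly), but both are elementary and correct.
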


\begin{proof}Conjugate so that the element fixes $0, \infty \in \chat$, and thus has the form 
\[ \begin{pmatrix}\lambda & 0 \\ 0 & \lambda^{-1} \end{pmatrix}, \]
corresponding to a transformation $z \mapsto \lambda^2 z$, for $\lambda \in \mathbb{C}$. Then the translation length $L = \log{| \lambda |^2} = 2 \log{|\lambda|}$. Since the trace has modulus $r \leq R$, we have the equation
\[ \lambda + \lambda^{-1} = r e^{i\theta} \]
for some $\theta \in [0,2\pi)$. Solving this using the quadratic equation yields
\[ \lambda = \dfrac{re^{i\theta} \pm \sqrt{r^2 e^{2i\theta} - 4}}{2}, \]
and thus that we have
\[ \left| \lambda \right| \leq \frac{1}{2} \left( r + \sqrt{r^2 + 4} \right). \]
Notice that this inequality is sharp, as it is achieved when the trace is purely imaginary (i.e. $\theta = \frac{\pi}{2}$ or $\frac{3\pi}{2}$). At the cost of slightly weakening the bound, we may replace $r$ with $\sqrt{r^2+ 4}$, and recall that $r \leq R$, to find that the translation length is no larger than
\[ 2 \log{\left( 2\sqrt{R^2+4} \right)} - 2\log{2} = 2\log{\left( \sqrt{R^2+4} \right)} = \log{\left( R^2+4 \right)}. \]
as required. \end{proof}

\noindent {\bf Remark.} We may apply this to the element from Theorem \ref{ARbound}; since the Adams--Reid bound corresponds to an element with trace (at worst) $2 + w^2 i$, this has modulus $\sqrt{w^4 + 4}$. Thus Lemma \ref{lengthbound1} would give the slightly weaker systole bound of $\log{\left( w^4 + 8 \right)}$. When convenient, we will use the weaker bound for the purposes of easier comparison with other bounds.

\noindent {\bf Definition.} Let $AR(w) = \sqrt{w^4+4}$ be the upper bound for the modulus of the trace of a minimal loxodromic element of $\Gamma$ when $\Gamma$ contains a parabolic which translates by distance (no larger than) $w$. As described above, this is the bound given by the combination of the Adams--Reid bound and Lemma \ref{lengthbound1}.

The main result of this section is the following theorem from the introduction.  Recall from Section \ref{S:preliminaries} that $V_c = C_0V$.

\begin{mainsystolethm} If $N$ is a non-compact, finite volume hyperbolic $3$-manifold of volume $V$, then the systole of $N$ has length at most
\[ \max\left\{  7.35534... , \log{ \left( 2 V_c^{4/3}+ 8 \right) } \right\} . \] \end{mainsystolethm}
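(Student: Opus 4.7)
The strategy is to locate a loxodromic $\gamma \in \Gamma$ whose trace has controlled modulus, and then apply Lemma~\ref{lengthbound1}. Conjugate so that a cusp of $N$ is based at $\infty$ with maximal horoball $B_\infty = \{h \geq 1\}$ in standard position with a tangent horoball at $0$. The parabolic subgroup $\Gamma_\infty$ translates by a lattice $\Lambda \subset \mathbb{C}$, and by Meyerhoff's bound (via B\"or\"oczky), the cross-sectional area $A = \mathrm{area}(\mathbb{C}/\Lambda)$ satisfies $A \leq 2V_c$. Let $w$ denote the waist size of this cusp.

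I would split into two cases. If $w \geq 2\pi$, shrink the horoball to height $h = w/(2\pi) \geq 1$, where the resulting non-maximal cusp has waist exactly $2\pi$. Applying Theorem~\ref{ARbound} directly with slope $w = 2\pi$ yields a systole bound of $\mathrm{Re}\bigl(2\arccosh(1 + 2\pi^2 i)\bigr) \approx 7.35534\ldots$, the first term in the maximum.

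If $w < 2\pi$, I use isometric spheres. The tangency defining the maximal cusp provides $\gamma \in \Gamma$ with $|c_\gamma| = 1$, so the centers of $S_\gamma$ and $S_{\gamma^{-1}}$, namely $-d/c$ and $a/c$, are separated by Euclidean distance exactly $|\mathrm{tr}(\gamma)|$. Multiplying $\gamma$ on the left and right by parabolics in $\Gamma_\infty$ translates each center independently through its $\Lambda$-coset, so after optimizing, $|\mathrm{tr}|$ is bounded by the distance in $\mathbb{C}/\Lambda$ between the two cosets. Combining this with the constraints on $\Lambda$ ($A \leq 2V_c$, shortest vector $w$, and $w \geq 1$ from Shimizu's lemma) and incorporating information about smaller isometric spheres supplied by a Ford-domain volume identity, the goal is to derive $|\mathrm{tr}|^2 \leq 2V_c^{4/3} + 4$; Lemma~\ref{lengthbound1} then gives systole at most $\log(2V_c^{4/3} + 8)$, the second term.

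The main obstacle is extracting the exponent $4/3$ in the second case. A naive torus-diameter bound yields $|\mathrm{tr}| = O(V_c^{1/2})$ for balanced lattices but degenerates to $O(V_c/w)$ when $w$ is small, so achieving a uniform bound $|\mathrm{tr}| = O(V_c^{2/3})$ (especially in the subregime $w \leq 2$, where Adams--Reid cannot be invoked at the maximal cusp) requires a delicate interaction between the lattice geometry, Shimizu's lemma, and the density and volume constraints on the full configuration of isometric spheres inside a fundamental domain for $\Gamma_\infty$.
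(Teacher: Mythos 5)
Your case split is backwards, and this is fatal.

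You send $w \geq 2\pi$ to Adams--Reid and $w < 2\pi$ to the isometric-sphere argument. The paper does the opposite: when the waist of the \emph{maximal} cusp is at most $2\pi$, the original Adams--Reid proof (Theorem~\ref{T:AR}) directly yields the $7.35534\ldots$ bound, since the Adams--Reid trace estimate $\sqrt{w^4+4}$ is increasing in $w$ and the waist supplies the needed short slope; and when the waist exceeds $2\pi$, one must fall back on the isometric-sphere/torus-diameter argument, which ultimately produces $\log(2V_c^{4/3}+8)$ after bounding $\ell$ from both sides by $2\pi < \ell \leq \sqrt{(4/\sqrt 3)V_c}$ and minimizing $\min\bigl(T(\ell),AR(\ell)\bigr)$ over that range.

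Your Case A cannot be repaired by shrinking the cusp. Theorem~\ref{ARbound} is about the \emph{maximal} cusp: the element $\gamma$ with $|c_\gamma|=1$ whose trace one controls is produced precisely by the tangency between two maximal horoballs at height $1$, and its trace modulus is governed by the genuine translation lengths in $\Gamma_\infty$ (hence by the waist $w$ of the maximal cusp), not by the rescaled slope length $w/h$ on a shrunken horosphere. If the shrink-and-apply step were legitimate, every cusped hyperbolic $3$--manifold would have systole at most $7.35534\ldots$, contradicting the paper's own examples $\HH^3/\Gamma(\pi^n)$ whose systoles grow like $\tfrac23\log V_n$.

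Your Case B is then exactly the regime where isometric spheres fail: the diameter estimate $|\mathrm{tr}| \lesssim \sqrt{\ell^2/4 + V_c^2/\ell^2}$ blows up as $\ell \to 0$, and you correctly notice the $O(V_c/w)$ degeneration without realizing it is symptomatic of having assigned the wrong tool to this regime. In the paper the small-$\ell$ range is precisely where the Adams--Reid loxodromic, with trace modulus $AR(\ell) = \sqrt{\ell^4+4}$, is the cheap one; the exponent $4/3$ comes from the crossover of $T(\ell)$ and $AR(\ell)$ near $\ell \sim V_c^{1/3}$ (Lemma~\ref{techlem2}). The ``delicate interaction'' you gesture at in the subregime $w \leq 2$ is not needed at all once the cases are split correctly, since a waist at most $2\pi$ (a fortiori at most $2$) is disposed of by the Adams--Reid argument.

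Several of your ingredients are right -- Meyerhoff's cusp-density bound giving $\mathrm{area}(\mathbb{C}/\Lambda)\le 2V_c$, the identification of $|\mathrm{tr}\gamma|$ with the distance between isometric-sphere centers when $|c_\gamma|=1$, and the reduction to a torus-diameter estimate -- but the overall architecture must be flipped, and the constrained minimization $S(\ell) = \min(T(\ell),AR(\ell))$ over $\ell \in (2\pi,\sqrt{(4/\sqrt3)V_c}\,]$ needs to be carried out explicitly.
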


\begin{proof}  We separate the proof into two cases.

\noindent
{\bf Case 1.} There exists a cusp of $N$ with a waist size of at most $2 \pi$.

\noindent For this case we can appeal directly to the result of Adams--Reid, Theorem \ref{T:AR}, (or more precisely its proof in \cite{AdamsReid}), to see that the systole length is at most $7.35534\ldots$, completing the proof in this case.

\noindent
{\bf Case 2.} Every cusp of $N$ has waist size greater than $2 \pi$.

\noindent
In the light of Lemma \ref{lengthbound1}, it suffices to establish an upper bound for the complex modulus of the trace of a minimal loxodromic element.  As such, we prove:

\begin{prop}\label{traceprop}If $N = \mathbb H^3/\Gamma$ is a non-compact, finite volume hyperbolic $3$-manifold of volume $V$ with a cusp having waist size greater than $2 \pi$, then $\Gamma$ has a loxodromic element with trace of complex modulus no greater than
\[ \sqrt{ 2V_c^{4/3} + 4  }. \] \end{prop}

Assuming this proposition we complete the proof of Theorem \ref{mainsystole}.  By Lemma \ref{lengthbound1}, this bound on the complex modulus of the trace gives a systole length of at most 
\begin{align*} \log{ \left( 2V_c^{4/3}+ 4 + 4 \right)} &=  \log{ \left( 2V_c^{4/3}+ 8 \right)} 
\end{align*}
which is the stated bound. \end{proof}

\noindent {\bf Remark.} In the following, we will often consider only the ``worst case" eventualities in order to obtain an upper bound that is as general as possible. As such, the argument and the resulting bound can likely be improved under more assumptions about $N$ (or $\Gamma$).

\begin{proof}[Proof of Proposition \ref{traceprop}]
We begin by choosing a maximal set of cusps for $N$, so that some cusp has a self-tangency.  After conjugating if necessary we assume, as we may, that this self-tangency lifts to a tangency between a pair of horoballs $H_0,H_\infty$ based at $0$ and $\infty$, respectively, in standard position.  Thus $H_\infty$ and $H_0$ are equivalent under the action of $\Gamma$ and are tangent at height $1$. There is then an element $\gamma \in \Gamma$ such that $\gamma(0) = \infty$ and $\gamma(H_0) = H_\infty$; furthermore, this $\gamma$ has isometric sphere $S_\gamma$ with center $0$ and radius $1$. As such, we have that
\[ \gamma = \begin{pmatrix} a & -\frac{1}{c} \\ c & 0 \end{pmatrix} \in \Gamma < \mathrm{PSL}_2(\mathbb{C}) \]
where $|c| = 1$. Observe that the trace of $\gamma$ is $\tr{\gamma} = a$.  

The element $\gamma \in \Gamma$ is not the unique element taking $0$ to $\infty$ as we can compose with any element of the stabilizer of $\infty$ which we denote $\Gamma_\infty < \Gamma$.  We choose $\gamma$ as above with $|\tr \gamma| = |a|$ minimal among all $\gamma \in \Gamma$ with $\gamma(0) = \infty$.
Based on the value of this trace, the following lemma produces a loxodromic element whose trace we can bound.


\begin{lem}\label{tracelemma}Suppose the Kleinian group $\Gamma$ is torsion-free, that $\gamma \in \Gamma < \pslc$ has the form 
\[ \gamma = \begin{pmatrix} a & -\frac{1}{c} \\ c & 0 \end{pmatrix} \]
with $|c|=1$ and $|a|$ minimal, that $\Gamma_\infty$ has coarea at most $V_c$ when viewed as acting by translations on $\mathbb{C}$ (or equivalently, on the horosphere at height $1$), and that the minimal parabolic translation in $\Gamma_\infty$ is by length $\ell > 2\pi$. Then
\[ 2\pi < \ell \leq \sqrt{\frac{4}{\sqrt{3}} V_c} \]
and either:
\begin{enumerate}[1.]
\item $| \tr{\gamma} | = |a| <2$, in which case $\Gamma$ contains a loxodromic element of trace modulus no greater than $2$; or
\item $| \tr{\gamma} | = |a| = 2$, in which case $\Gamma$ contains a loxodromic element of trace modulus no greater than 
\[ \sqrt{\ell^2+ 4};\] or
\item $| \tr{\gamma} | = |a| > 2$, in which case $\Gamma$ contains a loxodromic element of trace modulus no greater than
\[ \sqrt{ \frac{\ell^2}{4}  + \frac{V_c^2}{\ell^2} }. \]
\end{enumerate}
\end{lem}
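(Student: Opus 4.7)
The plan is to extract, in each case, a loxodromic element of $\Gamma$ whose trace can be controlled, and to obtain the upper bound on $\ell$ from a standard lattice-packing argument. For the bound on $\ell$: the classical packing inequality for $2$-dimensional lattices (the hexagonal lattice has the smallest coarea among lattices with a given shortest vector) gives $\ell^{2}\sqrt{3}/2 \le V_c$, which is in fact somewhat stronger than the stated bound $\ell \le \sqrt{4V_c/\sqrt{3}}$; the lower bound $\ell > 2\pi$ is given as part of the hypothesis.

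For the three cases, observe first that since $\Gamma$ is torsion-free and $\gamma \ne 1$ (because $\gamma(0)=\infty$), $\gamma$ is loxodromic iff $|\tr \gamma| = |a| \ne 2$. Thus Case~1 ($|a|<2$) is immediate: $\gamma$ itself is a loxodromic element of trace modulus less than $2$. In Case~2 ($|a|=2$), $\gamma$ is parabolic; I would take $\tau \in \Gamma_\infty$ a shortest parabolic, which translates $\mathbb{C}$ by some $t_0$ with $|t_0|=\ell$. A direct matrix computation gives $\tr(\gamma \tau^{\pm 1}) = a \pm c t_0$, and the parallelogram identity
\[
|a + c t_0|^{2} + |a - c t_0|^{2} \;=\; 2|a|^{2} + 2\ell^{2} \;=\; 2\ell^{2} + 8
\]
forces one of $\gamma\tau^{\pm 1}$ to have trace modulus at most $\sqrt{\ell^{2}+4}$. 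Moreover, $|a \pm c t_0|^{2} \ge (\ell - 2)^{2} > 4$ since $\ell > 2\pi > 4$, so both $\gamma\tau^{\pm 1}$ are loxodromic and at least one gives the claimed bound.

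Case~3 ($|a|>2$) is the substantive case; here $\gamma$ itself is already loxodromic and the task is to bound $|a|$. The key observation is that for any $\tau_1 \in \Gamma_\infty$ with translation value $t_1$, the product $\tau_1\gamma$ again sends $0 \to \infty$ and has exactly the same matrix form, but with $a$ replaced by $a + c t_1$. Hence minimality of $|a|$ is exactly the statement that $a$ lies in the Voronoi cell of the origin for the lattice $c\Gamma_\infty$, which (since $|c|=1$) has the same shortest vector length $\ell$ and coarea at most $V_c$ as $\Gamma_\infty$. The inequalities $|a \pm v_1 c|^{2} \ge |a|^{2}$ for a shortest lattice vector $v_1$ give $|\mathrm{Re}(a\,\overline{v_1 c})| \le \ell^{2}/2$, so the component of $a$ along $v_1 c$ has modulus at most $\ell/2$; taking a next-to-shortest lattice vector, which realizes perpendicular distance $V_c/\ell$ from the $v_1 c$-direction, similarly controls the perpendicular component of $a$ and yields $|a|^{2} \le \ell^{2}/4 + V_c^{2}/\ell^{2}$.

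The main obstacle is this covering-radius estimate in Case~3: because the lattice need not admit an orthogonal basis, the next-to-shortest vector has an oblique projection onto the $v_1 c$-direction, and the resulting cross terms must be absorbed using the $\ell/2$ bound on the $v_1 c$-component already established before the desired clean bound $\ell^{2}/4 + V_c^{2}/\ell^{2}$ falls out. The other two cases, by contrast, reduce to short explicit matrix computations once the normalized form of $\gamma$ is in place.
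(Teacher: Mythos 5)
Your Cases 1 and 2 are fine, and the parallelogram-law argument in Case 2 (one of $|a + ct_0|^2$, $|a - ct_0|^2$ is at most $\ell^2 + 4$ since they sum to $2\ell^2 + 8$) is a clean alternative to the paper's geometric argument, which splits the circle of possible centers into half-circles $\Omega_\pm$ and observes $|a \mp c\ell|^2 = 4 + \ell^2 - 2\ell\,\mathrm{Re}(a\bar{c}) \cdot (\pm 1) \le \ell^2 + 4$. One small slip: an element of a torsion-free Kleinian group with $\gamma \neq 1$ is loxodromic iff $\tr\gamma \neq \pm 2$, not iff $|\tr\gamma| \neq 2$ (a trace of modulus $2$ that is not real still gives a loxodromic element); your argument is unaffected because the paper's Case 2 also begins by disposing of $\tr\gamma \notin \mathbb{R}$, and your parallelogram bound covers that subcase anyway.

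Case 3, however, has a genuine gap that your own closing paragraph already gestures at but does not resolve. Bounding the component of $a$ along $v_1 c$ by $\ell/2$ is correct, and one would like an analogous bound of $V_c/\ell$ (or $2V_c/\ell$, see below) on the perpendicular component. But the inequality $|\mathrm{Re}(a\,\overline{v_2 c})| \le |v_2 c|^2/2$ only constrains the oblique projection of $a$ onto $v_2 c$; writing $v_2 c = p + iq$ in coordinates where $v_1 c$ is real, you get $|a_1 p + a_2 q| \le (p^2 + q^2)/2$, and after ``absorbing'' the cross term via $|a_1| \le \ell/2$ you obtain $|a_2| \le (p^2 + q^2 + \ell|p|)/(2q)$. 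At the hexagonal point $p = \ell/2$, $q = \tfrac{\sqrt3}{2}\ell$, this gives $|a_2| \le \tfrac{\sqrt3}{2}\ell$ and hence $|a| \le \ell$, far weaker than the true covering radius $\ell/\sqrt{3}$ and not of the claimed form $\sqrt{\ell^2/4 + V_c^2/\ell^2}$. The point is that the intersection of the two strips cut out by $\pm v_1 c$ and $\pm v_2 c$ is a parallelogram strictly containing the Voronoi cell; you would also need the strip from $\pm(v_1 - v_2)c$ (or $\pm(v_1 + v_2)c$) to cut it down to the hexagonal cell, or else compute circumradii of the Delaunay triangles directly. The paper instead bounds $|a|$ by the covering radius (the maximum distance in the Dirichlet domain $\Delta$) and asserts that this is maximized, among tori of area at most $2V_c$ with shortest vector $\ell$, by the rectangular torus of area exactly $2V_c$, giving half the diagonal $\sqrt{\ell^2/4 + V_c^2/\ell^2}$. (That assertion is true --- for a reduced basis $\ell, p + iq$ with $0 \le p \le \ell/2$, the circumradius of the Delaunay triangle with vertices $0$, $\ell$, $p + iq$ is decreasing in $p$ --- but the paper does not spell it out.) Separately, note a wording mismatch in the source: the lemma hypothesis says the coarea of $\Gamma_\infty$ is at most $V_c$, but the proof and the stated conclusions (including the $\ell \le \sqrt{4V_c/\sqrt3}$ bound and the $V_c^2/\ell^2$ term) all use coarea at most $2V_c$, i.e., cusp \emph{volume} at most $V_c$; taking the hypothesis literally, as you do, would give bounds tighter by a factor of $\sqrt{2}$ than those claimed, which is a sign to double-check which normalization is intended before using the lemma downstream.
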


\begin{proof}

Since we assume the minimal parabolic translation in $\Gamma_\infty$ is by $\ell$, if necessary we may conjugate $\Gamma$ by a rotation of $\mathbb C$ in $\pslc$ to make this parabolic equal to 
\[ \beta_\ell = \begin{pmatrix}1 & \ell \\ 0 & 1\end{pmatrix}.\]
We extend this to a basis for $\Gamma_\infty \cong \bZ^2$ so that the second generator has the minimal possible translation length.  Since the translation length is at least $\ell$, and since we can compose with powers of $\beta_\ell$ to reduce this, it follows that the second basis element translates by a complex number $z$ in the set
\[ J_\ell := \left\lbrace z \in \mathbb{C} \ \Bigg| \ |z|\geq \ell \mbox{ and } -\frac{\ell}{2} \leq \mathrm{Re}(z) \leq \frac{\ell}{2} \right\rbrace. \]
We find that $z = x + iy$ must have imaginary part $y$ at least $\frac{\sqrt{3}}{2} \ell$. Since the area of the boundary of the cusp is twice the volume of the cusp, and this is at most $V_c$, we have
\[ 2V_c \geq \ell y \geq \frac{\sqrt{3}}{2} \ell^2, \]
and so
\[ \ell \leq \sqrt{\frac{4}{\sqrt{3}} V_c }.\]
This proves the first claim of the lemma.

We now divide the proof up into the three cases in the statement of the lemma.

\noindent
{\bf Case 1.}  $|\tr \gamma | < 2$.

In this case, if the trace is real, then $\gamma$ is an elliptic element. Since $\Gamma$ is discrete, such an element has finite order and $\Gamma$ contains no such elements, hence this cannot occur.  Thus $\tr{\gamma} \notin \mathbb{R}$, and $\gamma$ is a loxodromic element with $|\tr{\gamma}| < 2$. $\qed$

For the next two cases, we make an observation:  Note that the isometric sphere $S_\gamma$ has center $0$ and radius $\frac{1}{|c|} = 1$, while $S_{\gamma^{-1}}$, the isometric sphere of $\gamma^{-1}$, has center $\frac{a}{c}$ and the same radius $\frac{1}{|c|}=1$.  Since $|c|=1$, we see that $ \left| \frac{a}{c} \right| = |a|$. Thus the distance between the centers of $S_\gamma$ and $S_{\gamma^{-1}}$ is equal to $| \tr{\gamma} | = |a| = \left| \frac{a}{c} \right|$.

Consider the Dirichlet fundamental domain $\Delta$ centered at $0$ for the action of $\Gamma_\infty$ on $\mathbb C$ (see Figure \ref{cuspfigure2}, the Dirichlet domain is the dotted region).
Composing $\gamma$ with an element of $\beta \in \Gamma_\infty$ we produce $\alpha = \beta \circ \gamma$ which also has $\alpha(0) = \infty$.  The isometric sphere of $\alpha$ is $S_\alpha = S_\gamma$, while the isometric sphere of $\alpha^{-1}$ is $S_{\alpha^{-1}} = \beta (S_{\gamma^{-1}})$.   Since we have assumed that $\gamma$ has minimal trace, it follows that the center $\frac{a}{c}$ for $S_{\gamma^{-1}}$ lies in $\Delta$.  As the third possibility of the lemma is simpler to handle, we deal with that case next.

\noindent
{\bf Case 3.} $|\tr \gamma| > 2$.

As in case 1, $\gamma$ is the desired loxodromic and so we are left to bound the modulus of its trace, or as just explained, the distance between the centers of the isometric spheres $S_\gamma$ and $S_{\gamma^{-1}}$.   From the discussion above, this is the maximum distance from the center of $\Delta$ to any other point of $\Delta$, or equivalently, the diameter of the quotient $\mathbb C/\Gamma_\infty$.  This diameter is maximized for the rectangular torus with area exactly $2V_c$.  In this case, the diameter is the length of $\frac{1}{2}(\ell + i \frac{2V_c}{\ell})$, which is half the diagonal of the rectangle, or 
\[ \frac{1}{2} \sqrt{\ell^2 + \frac{4V_c^2}{\ell^2}} = \sqrt{ \frac{\ell^2}{4} + \frac{V_c^2}{\ell^2}}. \]
This completes the proof for case 3. \qed


\begin{figure}[htb]
\begin{center}
\includegraphics[scale=0.65]{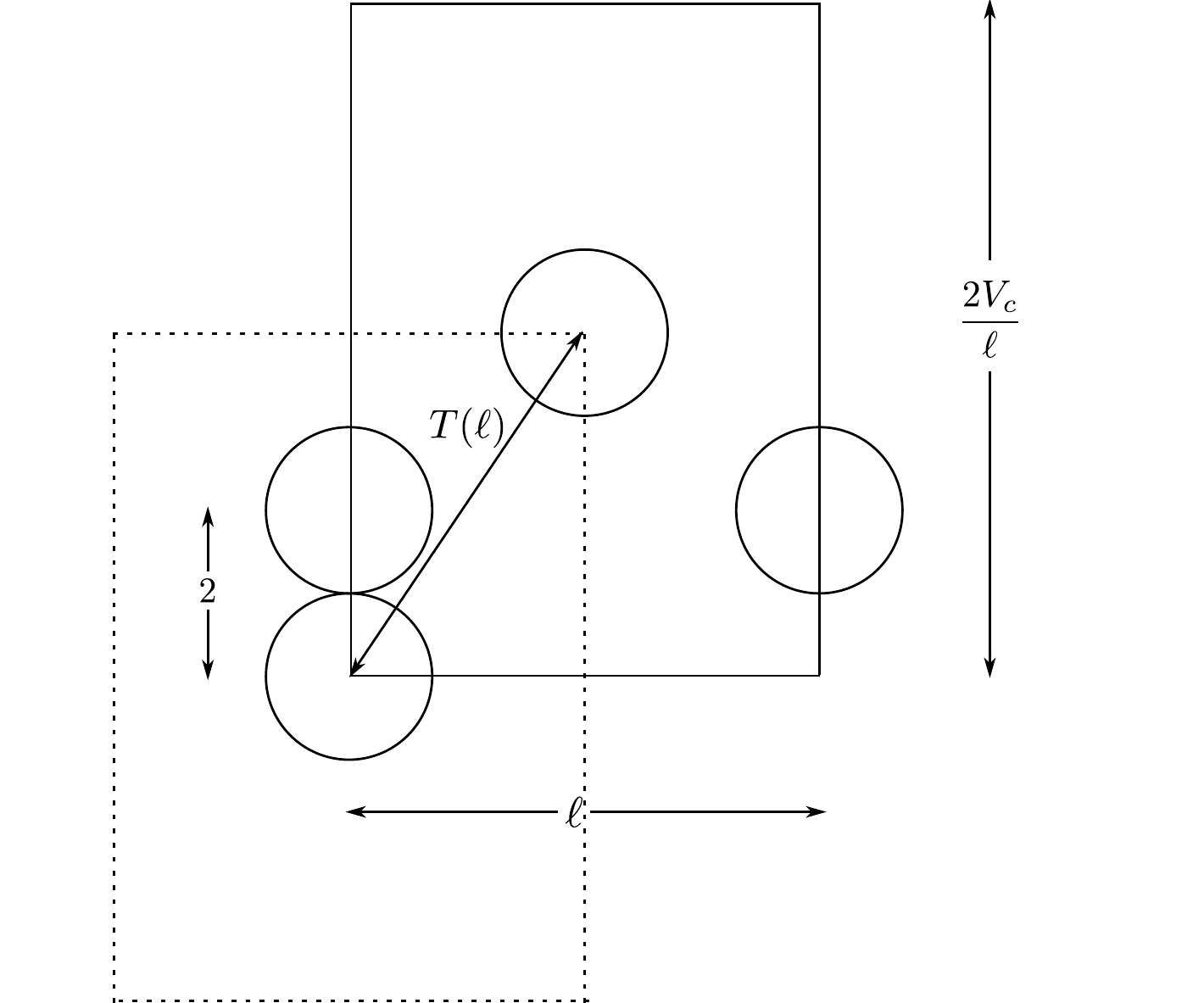}
\caption{\label{cuspfigure2} Trace of the loxodromic is no greater than the larger of $T(\ell)$ and $\sqrt{\ell^2+4}$}
\end{center}
\end{figure}

We now turn to the remaining possibility in the lemma.

\noindent
{\bf Case 2.} $|\tr \gamma| = 2$.

First observe that if $\tr{\gamma} \notin \mathbb{R}$, then $\gamma$ is a loxodromic element and we are done.  So, we assume that $\tr{\gamma} \in \mathbb{R}$, so that $\gamma$ is a parabolic element, fixing the point in $\mathbb{C}$ of tangency of $S_\gamma$ and $S_{\gamma^{-1}}$. 
Composing $\gamma$ with an element $\beta \in \Gamma_\infty$ produces an element $\alpha = \beta \circ \gamma$ with $S_\alpha = S_\gamma$ and $S_{\alpha^{-1}} = \beta ( S_{\gamma^{-1}} )$.  The required loxodromic element will be such an element $\alpha$.

Since $|\tr(\alpha)|$ is the distance from $0$ to the center of $S_{\alpha^{-1}} = \beta (S_{\gamma^{-1}})$, we seek to bound the distance of a $\beta$--translate of the center of $S_{\gamma^{-1}}$, for translates such that the center of $S_{\gamma^{-1}}$ is further than distance $2$ from $0$, and so $\alpha$ is loxodromic.
We are guaranteed that
\[ \beta_\ell^{\pm 1} \circ \gamma = \begin{pmatrix} 1 & \pm \ell \\ 0 & 1 \end{pmatrix} \begin{pmatrix} a & \frac{-1}{c} \\ c & 0 \end{pmatrix} = \begin{pmatrix} a \pm c\ell & \frac{-1}{c} \\ c & 0 \end{pmatrix}, \]
are both loxodromic, since both $\beta_\ell^{\pm}$ translate a distance $\ell > 2 \pi$.
Therefore we want to bound the minimum of $|a + c \ell|$ and $|a - c \ell|$.

The locus of possible centers $\frac{a}{c}$ forms a circle $\Omega$ of radius $2$ (see Figure \ref{parabolicfigure}).  Write $\Omega = \Omega_+ \cup \Omega_-$, where $\Omega_+$ are those points with nonnegative real part and $\Omega_-$ are those with nonpositive real parts.   Now for $\frac{a}{c} \in \Omega_\pm$ we have
\[ |\tr (\beta_\ell^{\mp 1} \circ \gamma) | = |a \mp c \ell| \leq \sqrt{\ell^2 + 4};\]
see Figure \ref{parabolicfigure}.
This completes the proof in this case.


\begin{figure}[htb]
\begin{center}
\includegraphics[scale=0.65]{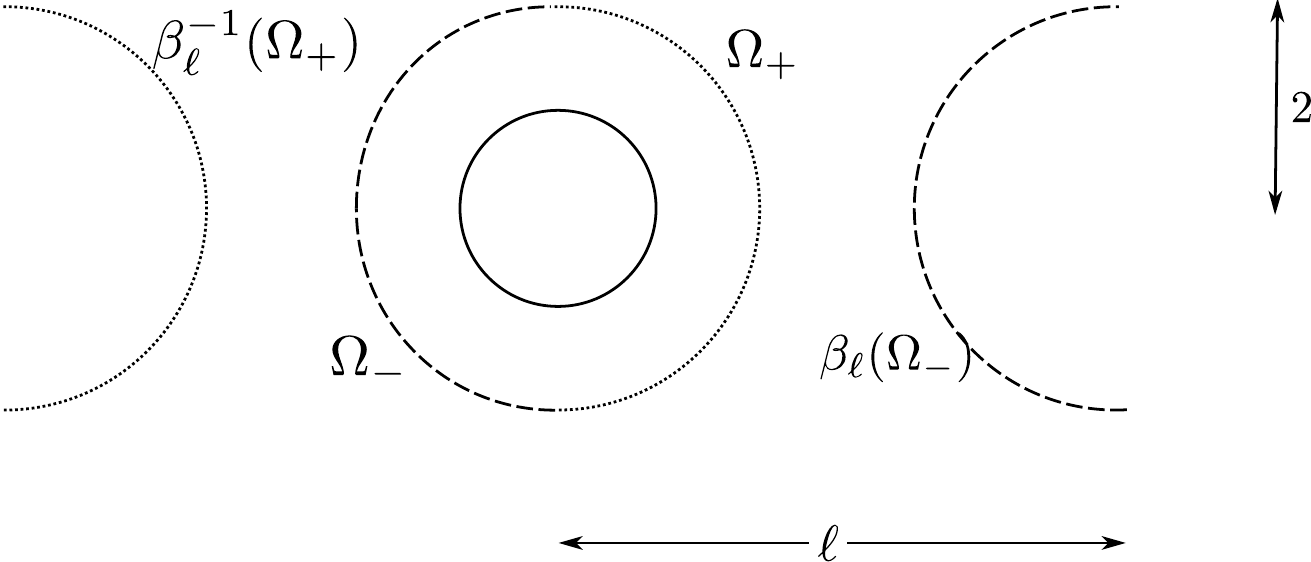}
\caption{\label{parabolicfigure} $\Gamma$ contains a loxodromic element of trace modulus at most $\sqrt{\ell^2+4}$.}
\end{center}
\end{figure}


\end{proof}

\noindent {\bf Definition.} We will denote by $T(\ell)$ the function 
\[T(\ell) = \sqrt{ \frac{\ell^2}{4}+ \frac{V_c^2}{\ell^2}  }.\]

In the event that $\ell$ is close to $2\pi$, it may be that the loxodromic element of trace $2 + \ell^2 i$, given by Adams--Reid, has smaller trace modulus $AR(\ell)$ than the loxodromic element given by case 3. As such, we may replace the function $T(\ell)$ in case 3 by the minimum $S(\ell) =\min{\left( T(\ell), AR(\ell)  \right) }$.

In the light of Lemma \ref{tracelemma}, and because we assume $\ell > 2\pi$ (so that the bounds from cases 2 and 3 exceed the trace bound of 2 given by case 1), we seek to bound the maximum of $S(\ell)$ and $\sqrt{\ell^2+4}$ for $\ell \in \left\lbrack 2\pi , \sqrt{\frac{4}{\sqrt{3}} V_c} \right\rbrack$. The function $\sqrt{\ell^2+4}$ is increasing, and thus is bounded above by its value at the endpoint $\ell = \sqrt{\frac{4}{\sqrt{3}}V_c}$, which is $\sqrt{\frac{4}{\sqrt{3}}V_c + 4}$. A bound for $S(\ell)$ is given by the following Lemma.

\begin{lem}\label{techlem2} For $2\pi \leq \ell \leq \sqrt{\frac{4}{\sqrt{3}} V_c}$, the function $S(\ell) = \min{\left( T(\ell), AR(\ell) \right)}$ is bounded above by 
\[ AR(2^{1/4} V_c^{1/3}) = \sqrt{2V_c^{4/3}+4}.\] 
\end{lem}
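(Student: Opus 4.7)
The plan is to split the interval $\left[2\pi, \sqrt{4V_c/\sqrt{3}}\right]$ at the point $\ell^{\ast} = 2^{1/4}V_c^{1/3}$, which is precisely the unique value where $AR(\ell^{\ast}) = \sqrt{(\ell^{\ast})^4+4}$ coincides with the claimed upper bound $\sqrt{2V_c^{4/3}+4}$. This yields a ``left'' regime (present only when $\ell^{\ast} \geq 2\pi$) and a ``right'' regime, and each regime will be bounded using a different one of the two functions defining $S$.

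On the left portion $\ell \in [2\pi, \ell^{\ast}]$, I would use monotonicity of $AR$: since $S(\ell) \leq AR(\ell)$ by definition and $AR(\ell) = \sqrt{\ell^4+4}$ is increasing in $\ell$, one gets $S(\ell) \leq AR(\ell) \leq AR(\ell^{\ast})$, which is the desired bound with no work.

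On the right portion $\ell \in \left[\max\{2\pi, \ell^{\ast}\}, \sqrt{4V_c/\sqrt{3}}\right]$, I would instead use $S(\ell) \leq T(\ell) = \sqrt{\ell^2/4 + V_c^2/\ell^2}$, estimating the two summands using opposite endpoints of the allowed interval. The lower bound $\ell \geq \ell^{\ast} = 2^{1/4}V_c^{1/3}$ controls the second summand via $V_c^2/\ell^2 \leq V_c^{4/3}/\sqrt{2}$, while the upper bound $\ell \leq \sqrt{4V_c/\sqrt{3}}$ controls the first via $\ell^2/4 \leq V_c/\sqrt{3}$. Combining yields $T(\ell)^2 \leq V_c^{4/3}/\sqrt{2} + V_c/\sqrt{3}$.

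To conclude, it then remains to verify the elementary inequality $V_c^{4/3}/\sqrt{2} + V_c/\sqrt{3} \leq 2V_c^{4/3} + 4$, equivalent to $(2 - 1/\sqrt{2})V_c^{4/3} + 4 \geq V_c/\sqrt{3}$. I expect this last algebraic step to be essentially the only obstacle: since the $V_c^{4/3}$ term on the left dominates the linear growth in $V_c$ on the right, a routine computation of the unique critical point of the difference shows it is still positive there, so the inequality holds for all $V_c \geq 0$, and in particular throughout the relevant range $V_c \geq \pi^2 \sqrt{3}$ in which the lemma's interval is nonempty.
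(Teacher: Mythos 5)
Your proof is correct, and it takes a genuinely different and cleaner route than the paper's. The paper first changes variables ($x=\ell^2$), reduces to comparing $a(x)=4x^2+16$ and $t(x)=x+4V_c^2/x$, locates the unique crossing point $x_0$ of these two functions via a monotone cubic $f(x)=4x^3-x^2+16x-4V_c^2$, proves the key estimate $x_0<\sqrt{2}V_c^{2/3}$, and then bounds $s=\min(a,t)$ on each side of $x_0$ using monotonicity of $a$ and convexity of $t$. You instead split the interval at the \emph{target} point $\ell^{\ast}=2^{1/4}V_c^{1/3}$ rather than at the crossing point, and exploit the simple fact that $S\le AR$ and $S\le T$ pointwise: on the left of $\ell^{\ast}$ you use $S\le AR$ and that $AR$ is increasing, and on the right you use $S\le T$ together with a term-by-term estimate $\ell^2/4\le V_c/\sqrt{3}$ and $V_c^2/\ell^2\le V_c^{4/3}/\sqrt{2}$. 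This sidesteps the entire cubic/convexity analysis: you never need to know where $S$ switches from $AR$ to $T$, only that you are free to use whichever bound is convenient on each piece. One small simplification you could make in the final step: since the lemma's interval is nonempty only when $V_c\ge \pi^2\sqrt{3}>1$, you have $V_c^{4/3}>V_c>V_c/\sqrt{3}$, and since $2-1/\sqrt{2}>1$ the inequality $(2-1/\sqrt{2})V_c^{4/3}+4\ge V_c/\sqrt{3}$ is immediate, with no need to locate the critical point of the difference.
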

\begin{proof}
To simplify the calculation here, square both $AR$ and $T$, multiply them by $4$, and precompose by the square root function.   This produces two new functions $a(x) = 4 x^2 + 16$ and $t(x) = x + \frac{4V_c^2}{x}$.  To prove the lemma it suffices to bound the minimum $s(x) = \min\{ a(x),t(x) \}$ on the interval $\left( 0,\frac{4 V_c}{\sqrt{3}}\right]$ by $a(\sqrt{2} V_c^{2/3}) = 8V_c^{4/3} + 16$.

First note that the cubic function $f(x) = x(a(x) - t(x)) = 4x^3 - x^2 + 16x - 4V_c^2$ is monotone since $f'(x) = 12x^2 - 2x + 16 > 0$ for all $x$.  Since $f(0) = - 4V_c^2 < 0$ it follows that $a(x) < t(x)$ for all sufficiently small $x > 0$.  Furthermore, the leading coefficient of $f$ is $4> 0$ so $f(x) = 0$ has a unique solution $x_0 > 0$,  and hence $a(x) = t(x)$ for exactly one $x = x_0 > 0$.  Consequently, for $x > 0$, $a(x) > t(x)$ if and only if $x > x_0$.

\noindent
{\bf Claim.} $x_0 < \sqrt{2} V_c^{2/3}$.

It suffices to show that $a(\sqrt{2} V_c^{2/3}) - t(\sqrt{2} V_c^{2/3}) > 0$.  Plugging in we get
\[ a(\sqrt{2} V_c^{2/3})-t(\sqrt{2} V_c^{2/3}) = 8V_c^{4/3} + 16 - \sqrt{2} V_c^{2/3} - \frac{4V_c^2}{\sqrt{2}V_c^{2/3}} = (8-2\sqrt{2}) V_c^{4/3} - \sqrt{2}V_c^{2/3} + 16.\]
Considering the discriminant of the quadratic $(8-2\sqrt{2}) z^2 - \sqrt{2}z + 16$, we see that this is positive for every real number $V_c$, and the claim follows. \qed

Now we see that $s(x)$ is equal to $a(x)$ on $(0,x_0]$ and $t(x)$ on $\left[ x_0, \frac{4 V_c}{\sqrt{3}} \right]$.  Since $a(x)$ is increasing on $[0,\infty)$, $s(x)$ is bounded above by $a(x_0) < a(\sqrt{2} V_c^{2/3})$ on the interval $(0,x_0]$.  

Now we prove that this bound for $s(x)$ is also valid on $\left[ x_0, \frac{4 V_c}{\sqrt{3}} \right]$.  Calculating $t'(x) = 1 - \frac{4V_c^2}{x^2}$ and $t''(x) = \frac{8V_c^2}{x^3}$ we see that on the interval $\left[ x_0, \frac{4 V_c}{\sqrt{3}} \right]$, $t(x)$ is convex and has a unique minimum at $x = 2V_c$ (here we're using the fact that $V_c > 1$ to guarantee $V_c > V_c^{2/3}$).   Since $t(x_0) = a(x_0) < a(\sqrt{2} V_c^{2/3})$, it follows that for $x \in \left[ x_0, \frac{4 V_c}{\sqrt{3}} \right]$, we have
\[ t(x) \leq \max \left\{ t(x_0),t\left(\frac{4 V_c}{\sqrt{3}}\right) \right\} \leq \max \left\{a( \sqrt{2} V_c^{2/3}),t\left(\frac{4 V_c}{\sqrt{3}}\right) \right\} .\]
Since
\[ t\left( \frac{4V_c}{\sqrt{3}} \right) = \frac{4V_c}{\sqrt{3}} + \frac{4V_c^2\sqrt{3}}{4V_c} = \frac{7 V_c}{\sqrt{3}} < 8V_c^{4/3} + 16 = a(\sqrt{2} V_c^{2/3}) \]
it follows that $t(x) < a(\sqrt{2} V_c^{2/3})$ as required.
\end{proof}

It remains to check when this quantity also bounds the maximum of $\sqrt{\ell^2+4}$ for $\ell \in \left[ 2 \pi, \sqrt{\frac{4 V_c}{ \sqrt{3} }} \right]$. This is true when 
\[ \sqrt{2V_c^{4/3}   +4  } \geq \sqrt{\frac{4}{\sqrt{3}}V_c + 4}, \]
and so when
\[ 2V_c^{4/3} \geq \frac{4V_c}{\sqrt{3}}. \]
As this only requires $V_c \geq \frac{8}{3 \sqrt{3}} \approx 1.5396\ldots$ and the volume of a maximal cusp with waist size at least $2 \pi$ is at least
\[ \frac{\sqrt{3}}{4}(2\pi)^2 \approx 17.094... \ \ , \]
it follows that for all manifolds $M$ satisfying this condition, the bound given by Lemma \ref{techlem2} is a bound for the modulus of the trace of a loxodromic element.   This completes the proof of the proposition.
\end{proof}

Due to the estimates involved in the above argument, this bound is not sharp.
Furthermore, we do not know of a sequence of cusped hyperbolic $3$-manifolds which have the property that the systole length grows in proportion with $\log{\left( V^{4/3} \right)}$. The following is an example where the systole length is proportional to $\log{\left( V^{2/3} \right)}$; this compares with the work of Katz, Schaps and Vishne \cite{KSV}, who construct examples of compact hyperbolic $3$-manifolds where the systole length also grows like $\log{\left( || M ||^{2/3} \right)}$, where $||M||$ denotes the simplicial volume of $M$.

\noindent {\bf Example.} Consider the Bianchi group $\mathrm{PSL}_2(\mathcal{O}_2)$, where $\mathcal{O}_2$ denotes the ring of integers $\mathbb{Z}[\sqrt{-2}]$ of the imaginary quadratic number field $\mathbb{Q}(\sqrt{-2})$. An element $a+b\sqrt{-2} \in \mathcal{O}_2$, with $a, b \in \mathbb{Z}$, has norm 
\[ N(a+b\sqrt{-2}) = a^2 + 2b^2.\]
The prime $11$ splits in $\mathcal{O}_2$ as $11 = (3 + \sqrt{-2})(3-\sqrt{-2})$. Let $\pi = 3 + \sqrt{-2}$, so $N(\pi) = 11$, and consider the sequence of principal congruence subgroups $\Gamma(\pi^n) <  \mathrm{PSL}_2(\mathcal{O}_2)$, where
\[ \Gamma(N) = \mathrm{P} \left\lbrace \begin{pmatrix}a & b \\ c & d \end{pmatrix} \in \mathrm{SL}_2(\mathcal{O}_2) \ \Big| \  a \equiv d \equiv 1, b \equiv c \equiv 0 \mbox{ mod }  N  \right\rbrace . \]
These groups are all torsion-free and so the quotients $\mathbb{H}^3/\Gamma(\pi^n)$ are hyperbolic $3$-manifolds. The volume $V_n$ of each is the product of the volume of the base orbifold $\mathbb{H}^3/\mathrm{PSL}_2(\mathcal{O}_2)$ multiplied by the index $\left[ \mathrm{PSL}_2(\mathcal{O}_2) : \Gamma(\pi^n) \right]$; a formula for this index is given by Newman \cite{Newman}:
\[  \left[ \mathrm{PSL}_2(\mathcal{O}_2) : \Gamma(\pi^n) \right] = \frac{(N(\pi^n))^3}{2} \left( 1 - \frac{1}{N(\pi)^2} \right) = \frac{11^{3n}}{2} \frac{120}{121};   \]
thus, the volume is $V_n = 11^{3n} C$ for some constant $C >  0$. Since $\mathcal{O}_2$ is a principal ideal domain, a generic element of $\Gamma(\pi^n)$ has the form
\[ \begin{pmatrix} a \pi^n +1 & b \pi^n \\ c \pi^n & d \pi^n +1 \end{pmatrix}, \]
where $a, b, c, d \in \mathcal{O}_2$. Thus the trace has the form $(a+d) \pi^n + 2$, and the determinant gives the equation
\[ ad\pi^{2n} + (a+d)\pi^n + 1 - bc \pi^{2n} = 1, \]
and so
\[ \pi^n \left( ad \pi^n - bc \pi^n + (a+d) \right) = 0. \]
Therefore, we see that $(a+d) \in (\pi^n)$. If $(a+d) =0 $ then the trace is $2$, corresponding to a parabolic element. Thus to find the minimal possible loxodromic trace, we may assume $(a+d) \neq 0$. Because $\mathcal{O}_2$ contains no nonzero elements of modulus less than $1$, the complex modulus $|a+d|$ must be at least that of $\pi^n$, which is equal to $11^{n/2}$, because $|\pi| = \sqrt{11}$. Thus the minimal loxodromic trace $(a+d)\pi^n +2$ has complex modulus at least $| \pi^{2n} | = 11^n$. Recalling the proof of Lemma \ref{lengthbound1}, and if necessary replacing $\lambda$ with $\lambda^{-1}$, we have that if a loxodromic isometry has trace at least $R$, then $| \lambda | \geq \frac{R}{2}$, and thus the systole length is at least $\log{\left( \frac{R^2}{4} \right)}$. Therefore, the present case gives a systole length $l_n$ of at least $\log{ \left( \frac{11^{2n}}{4} \right)} = 2n\log{11} - \log{4}$. Taking the logarithm of the volume found above yields
\begin{align*} \log{V_n} &= 3n \log{11} + \log{C} \\
                                         &\leq \frac{3}{2} l_n + \frac{3}{2} \log{4} + \log{C}, \end{align*}
and so
\[ l_n \geq \frac{2}{3} \log{V_n} - \log{4} - \frac{2}{3}\log{C}. \]

Given Theorem \ref{mainsystole}, it is natural to ask whether one can find a sequence of hyperbolic 3-manifolds whose systole lengths grow faster than these examples. That is:

\noindent {\bf Question.} Do there exist hyperbolic manifolds $\{ M_n \}$ such that $V_n = \mathrm{vol}(M_n) \to \infty$ and the systole lengths $\mathrm{sys}(M_n)$ grow like $\log{V_n^\varepsilon}$ for $\varepsilon \in (\frac{2}{3}, \frac{4}{3} ]$ ?

\section{Links in Manifolds}

In this section, we prove the following theorem.

\begin{linkthm}\label{main}Let $M$ be a compact hyperbolic $3$-manifold of volume $V$, and let $L \subset M$ be a link such that the complement $M \setminus L$ is hyperbolic. Then the systole length of $M \setminus L$ is bounded above by
\[ \log{\left( 2 \left( ( (0.853\ldots) V)^{2/3} + 4 \pi^2 \right)^2 + 8  \right) }. \]\end{linkthm}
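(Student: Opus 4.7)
The plan is to split into cases based on the minimum slope length on maximal cusps of $N := M \setminus L$, and to combine three tools already available: the Adams--Reid short-slope argument, the Futer--Kalfagianni--Purcell volume inequality (Theorem~\ref{fkpbound}) which controls $\vol(N)$ in terms of $V=\vol(M)$ once all filling slopes are long, and Theorem~\ref{mainsystole} which bounds the systole of $N$ in terms of $\vol(N)$.

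Let $V' = \vol(N)$, let $w_{\min}$ denote the minimum waist size over cusps of $N$, and let $\ell_{\min}$ denote the minimum length of a meridian of $L$ (a filling slope) on a maximal cusp. If $w_{\min} \leq 2\pi$, then by the argument recalled in Case 1 of Theorem~\ref{mainsystole}, $\sys(N) \leq 7.35534$, which is comfortably below the target since at $V=0$ the latter equals $\log(32\pi^4+8) \approx 8.05$. So I may assume $w_{\min} > 2\pi$; then every filling slope also has length greater than $2\pi$, and Theorem~\ref{fkpbound} applies to give
\[ V \geq \left(1-\frac{4\pi^2}{\ell_{\min}^2}\right)^{3/2} V'. \]

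Now I introduce the threshold $\ell^* := \sqrt{4\pi^2 + \sqrt{2}(C_0V)^{2/3}}$ and split into two sub-cases. If $\ell_{\min} \leq \ell^*$, then Theorem~\ref{ARbound} combined with Lemma~\ref{lengthbound1}, applied to the short meridian (of length $\ell_{\min} > 2\pi > 2$), yields
\[ \sys(N) \leq \log(\ell_{\min}^4+8) \leq \log\bigl((\ell^*)^4+8\bigr) = \log\bigl((\sqrt{2}(C_0V)^{2/3}+4\pi^2)^2+8\bigr). \]
If instead $\ell_{\min} > \ell^*$, the FKP inequality gives $V' \leq V\bigl(1-4\pi^2/(\ell^*)^2\bigr)^{-3/2}$. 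Substituting the value of $\ell^*$ and simplifying yields $2(C_0V')^{4/3} \leq (\sqrt{2}(C_0V)^{2/3}+4\pi^2)^2$, so Theorem~\ref{mainsystole} produces exactly the same systole bound $\log\bigl((\sqrt{2}(C_0V)^{2/3}+4\pi^2)^2+8\bigr)$.

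Finally, the elementary inequality $(\sqrt{2}x+y)^2 \leq 2(x+y)^2$ for $x,y \geq 0$ (since the difference equals $(4-2\sqrt{2})xy + y^2$), applied with $x=(C_0V)^{2/3}$ and $y=4\pi^2$, shows that this refined estimate is bounded by $\log(2((C_0V)^{2/3}+4\pi^2)^2+8)$, giving the stated theorem. The main obstacle is pinpointing the correct threshold $\ell^*$: the Adams--Reid bound is increasing in $\ell_{\min}$ while the FKP-plus-Theorem~\ref{mainsystole} bound is decreasing, and the value of $\ell^*$ must be chosen so that these two opposite-monotonicity estimates exactly meet. All the other steps amount to citing known results and an algebraic verification.
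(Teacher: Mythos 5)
Your proof is correct and uses the same three ingredients (Adams--Reid short-slope bound, the Futer--Kalfagianni--Purcell volume inequality, and Theorem~\ref{mainsystole}) combined by the same optimization, so it is essentially the paper's argument. The only cosmetic difference is the parametrization: you split into cases at a threshold $\ell^*$ on the minimal filling-slope length, whereas the paper views the two competing estimates as functions $F_1(X), F_2(X)$ of $X = \vol(M\setminus L)$ and evaluates at their unique crossing $X_0 = \bigl(V^{2/3} + 4\pi^2/(\sqrt{2}C_0^{2/3})\bigr)^{3/2}$; substituting $X_0$ into the FKP bound on slope length recovers exactly your $\ell^*$, so the two formulations are dual and yield the identical constant. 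Your final step passing from $(\sqrt{2}(C_0V)^{2/3}+4\pi^2)^2$ to $2((C_0V)^{2/3}+4\pi^2)^2$ reconciles the slight discrepancy between the sharper form in Theorem~\ref{thm1} and the weaker form restated here, which the paper leaves implicit.
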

As in previous sections, we write $C_0 = 0.853...$ to make the expressions easier to read.

\begin{proof} Write $M \setminus L = \mathbb H^3/\Gamma_L$.  We will consider two functions which bound the modulus of a loxodromic element of $\Gamma_L$ in terms of the volume of $M \setminus L$, which we denote $X = \mathrm{vol}(M \setminus L)$.  The first is the bound established in Theorem \ref{mainsystole}
\[ F_1(X) = \sqrt{ 2(C_0X)^{4/3}  + 4}. \]

The second function comes from the following theorem of Futer, Kalfagianni and Purcell \cite{FKP}:

\begin{thm}[\cite{FKP}, Theorem 1.1]\label{fkpbound} Let $M \setminus L$ be a complete, finite volume, noncompact hyperbolic manifold. Let $M$ denote the closed manifold obtained by filling along specified slopes on each of the boundary tori, each of which have length at least $2\pi$, and the least of which is denoted by $\ell_{\text{min}}$. Then $M$ is hyperbolic, and 
\[ \mathrm{Vol}(M) \geq \left( 1 - \left( \frac{2\pi}{\ell_{\text{min}}} \right)^2 \right)^{3/2} \mathrm{Vol}(M \setminus L).\] \end{thm}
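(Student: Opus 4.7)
I would prove this by combining the Gromov--Thurston $2\pi$-Theorem with the minimum volume-entropy theorem of Besson--Courtois--Gallot (or its extension to cusped hyperbolic manifolds due to Boland--Connell--Souto). The idea is to interpolate between the complete hyperbolic metric on $M \setminus L$ and the hyperbolic metric on the closed filled manifold $M$ via an auxiliary Riemannian metric on $M$ whose geometry is controlled quantitatively by $\ell_{\min}$, and then deduce the volume inequality from an entropy--volume comparison.

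The core construction proceeds as follows. For each cusp of $M \setminus L$, choose the horoball neighborhood $H_i$ on whose Euclidean boundary the specified filling slope $r_i$ has length exactly $\ell_i \geq 2\pi$, and let $N = (M \setminus L) \setminus \bigsqcup_i \mathrm{int}(H_i)$, so that $M$ is obtained by attaching solid tori $V_i$ to $N$ with $r_i$ bounding a meridian. Using the Bleiler--Hodgson refinement of the $2\pi$-Theorem, extend the restriction of the hyperbolic metric on $N$ smoothly across each $V_i$ via a negatively curved, rotationally symmetric warped-product metric $dt^2 + f(t)^2 \, d\theta^2 + g(t)^2 \, d\phi^2$, with profile functions $f,g$ chosen to match the boundary data on $\partial V_i = \partial H_i$ with $C^2$ continuity. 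The main technical step is to engineer $f$ and $g$ so that the sectional curvatures of the resulting metric $\tilde g$ on $M$ are pinched quantitatively by $\ell_{\min}$: the target is that every sectional curvature lies in an interval guaranteeing that the volume entropy $h(\tilde g)$ is bounded by $2/\zeta$, with $\zeta^2 = 1-(2\pi/\ell_{\min})^2$. This is a warped-product curvature computation constrained by the geometric boundary data, and it is where the hypothesis $\ell_i \geq 2\pi$ enters in an essentially sharp way.

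With such a $\tilde g$ in hand, the final step applies the Besson--Courtois--Gallot inequality to the closed hyperbolic manifold $M$: any Riemannian metric $g$ on $M$ satisfies
\[ h(g)^3 \, \vol(M,g) \;\geq\; h(g_0)^3 \, \vol(M) \;=\; 8 \vol(M), \]
where $g_0$ denotes the hyperbolic metric on $M$. Combining this with the entropy bound $h(\tilde g) \leq 2/\zeta$ from the construction, together with the elementary volume estimate $\vol(M, \tilde g) \leq \vol(M \setminus L)$ (verified by direct integration of the warped-product metric and noting that it replaces each horoball cusp by a compact solid torus whose volume is at most that of the excised cusp end), yields $\vol(M) \geq \zeta^3 \vol(M \setminus L)$, which is exactly the stated inequality. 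I expect the main obstacle to be the curvature pinching in the construction of $\tilde g$: arranging the warping so that the entropy constant comes out to be exactly $2/\zeta$ with $\zeta^2 = 1 - (2\pi/\ell_{\min})^2$ is where the precise form of the bound is forged, and any looseness in this step propagates directly into the final inequality.
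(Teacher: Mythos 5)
This statement is not proved in the paper at all; it is quoted verbatim from Futer--Kalfagianni--Purcell, and the paper simply cites it as ``[\cite{FKP}, Theorem 1.1].'' So there is no in-paper proof to compare against; what can be assessed is whether your sketch is a correct reconstruction of the argument in \cite{FKP}.

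Your high-level ingredients --- the Bleiler--Hodgson refinement of the $2\pi$-Theorem to build a negatively curved warped-product metric $\tilde g$ on the filled manifold, and a Besson--Courtois--Gallot/Boland--Connell--Souto volume-entropy comparison --- are indeed the two pillars of the Futer--Kalfagianni--Purcell proof. However, the final assembly as you have written it does not produce the stated inequality; the signs go the wrong way. The BCG inequality you invoke reads $h(\tilde g)^3\,\vol(M,\tilde g)\geq 8\,\vol(M)$, so it bounds $\vol(M)$ \emph{from above} by $\tfrac{1}{8}h(\tilde g)^3\vol(M,\tilde g)$. Feeding in both of your estimates, namely $h(\tilde g)\leq 2/\zeta$ and $\vol(M,\tilde g)\leq\vol(M\setminus L)$, yields
\[
\vol(M)\;\leq\;\frac{1}{8}\left(\frac{2}{\zeta}\right)^3\vol(M\setminus L)\;=\;\frac{1}{\zeta^3}\,\vol(M\setminus L),
\]
which (since $\zeta<1$) is weaker even than Thurston's strict volume decrease under Dehn filling, and is the opposite of what is wanted. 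An upper bound on the entropy of $\tilde g$ together with an upper bound on its volume can never give a lower bound on $\vol(M)$ via BCG; you need either a metric with curvature bounded \emph{below} by $-1$ (so that Ricci $\geq -(n-1)g$ and $h\leq n-1$) whose volume is bounded \emph{below} by a definite multiple of $\vol(M\setminus L)$, feeding into the minimal-volume corollary of BCG, or to apply the Boland--Connell--Souto finite-volume rigidity theorem with the \emph{cusped} manifold $M\setminus L$ in the role of the locally symmetric target. In \cite{FKP} the curvature estimate on $\tilde g$ is a two-sided pinching into $[-1,-\zeta^2]$, not $\leq -\zeta^2$ as you assert (which would force $h\geq 2\zeta$, not $h\leq 2/\zeta$), and the volume inequality they establish for $\tilde g$ runs in the opposite direction from your ``elementary estimate.'' Until the direction of those two quantitative claims is corrected and the BCS theorem is applied with the correct source and target, the argument does not close.
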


Solving the inequality given in the theorem for the minimal parabolic translation distance $\ell_{\mathrm{min}}$ we find that
\[ \ell_{\mathrm{min}} \leq \dfrac{2\pi}{\sqrt{ 1- \left( \dfrac{V}{\mathrm{Vol}(M \setminus L)} \right)^{2/3} }  } = \dfrac{2\pi}{\sqrt{ 1- \left( \dfrac{V}{X} \right)^{2/3} }  } . \]
Thus $\ell_{\mathrm{min}}$ is bounded above by a function of $X$ with $X> V$.  
Applying the combination of Theorem \ref{ARbound} and Lemma \ref{lengthbound1}, as described in the remark following Lemma \ref{lengthbound1}, we can bound the modulus of the trace of a loxodromic element of $\Gamma_L$ by $AR(\ell_{\mathrm{min}})$.  Since $AR$ is increasing, the modulus of the trace is thus bounded above by the following, which we also view as a function of $X$.
\[ F_2(X) =  \sqrt{ \dfrac{16 \pi^4}{\left( 1- \left( \dfrac{V}{X} \right)^{2/3} \right)^2  }  + 4 }. \]
The modulus of the trace of a loxodromic element of $\Gamma_L$ is thus bounded above by the minimum of $F_1(X)$ and $F_2(X)$.

Now observe that $F_1(X)$ is an increasing function and $F_2(X)$ is a decreasing function.  
Consequently, if there is a common value of these two functions, then it occurs at a unique point, and furthermore, the value at the unique point where they agree serves as an upper bound for the modulus of the trace of a loxodromic element of $\Gamma_L$.



Setting $F_1$ and $F_2$ equal, we find
\[  \dfrac{16 \pi^4}{\left( 1- \left( \dfrac{V}{X} \right)^{2/3} \right)^2} =  2(C_0X)^{4/3} \]
which, with some rearrangement, becomes
\begin{align*} \frac{16 \pi^4}{2C_0^{4/3}} &= X^{4/3}\left( 1- \left( \dfrac{V}{X} \right)^{2/3} \right)^2 \\
&= \left( X^{2/3} - X^{2/3} \left( \dfrac{V}{X} \right)^{2/3} \right)^2 \\
&= \left( X^{2/3} - V^{2/3} \right)^2. \end{align*}
Taking square roots, and recalling that $X > V$, we obtain
\[ \frac{4 \pi^2}{\sqrt{2}C_0^{2/3}} =  X^{2/3} - V^{2/3} \]
and so
\[ X = \left( V^{2/3} +  \frac{4 \pi^2}{\sqrt{2}C_0^{2/3}} \right)^{3/2}. \]
Plugging this value for $X$ into $F_1$, and then applying Theorem \ref{mainsystole}, gives as a bound for the systole
\begin{align*}
\log(F_1(X)^2 + 4) & =  \log\left( 2 \left( C_0 \left( V^{2/3} +  \frac{4 \pi^2}{\sqrt{2}C_0^{2/3}} \right)^{3/2} \right)^{4/3} + 4 + 4 \right)\\
& =  \log\left(  \left( \sqrt{2}(C_0V)^{2/3} + 4 \pi^2 \right)^2 + 8 \right)
\end{align*}
as required.\end{proof}

\noindent {\bf Example.} We recall the examples from the end of Section \ref{S:systolebounds} setting $N_n = \HH^3/\Gamma(\pi^n)$.  This has volume $V_n \to \infty$ as $n \to \infty$ and systole length $l_n$ satisfying 
\[ l_n \geq \frac{2}{3} \log{V_n} - \log{4} - \frac{2}{3}\log{C}\]
for some constant $C>0$.  By Thurston's Dehn surgery theorem \cite{BenedettiPetronio}, we see that for all but finitely many Dehn fillings on $N_n$, the volume of any of the resulting manifolds $M_n$ is at least $V_n-1$.  On the other hand, each such $M_n$ contains a link $L_n \subset M_n$ (the union of the core curves of the filling solid tori), so that $M_n \setminus L_n \cong N_n$.

These examples illustrate the fact that there is no universal bound on the systole length of a hyperbolic link in a closed hyperbolic manifold since $M_n \setminus L_n$ has systole length $l_n \to \infty$.  Moreover, this shows that one cannot do better than a logarithmic bound on systole length in terms of volume of the manifold.  As with the question at the end of Section \ref{S:systolebounds}, it is natural to ask if one can asymptotically improve the coefficient $4/3$ of the logarithm in Theorem \ref{thm1} to some number between $2/3$ and $4/3$.\\

We end the paper with an application of Theorem \ref{thm1}.

\begin{cor}\label{congruence} For a fixed compact hyperbolic $3$-manifold $M$, there are only finitely many (homeomorphism types of) hyperbolic link complements $M \setminus L$ such that $\pi_1( M \setminus L)$ is isomorphic to a principal congruence subgroup $\Gamma(I)$ of some Bianchi group $\mathrm{PSL}_2(\mathcal{O}_d)$.\end{cor}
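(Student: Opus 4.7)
The plan is to apply Theorem~\ref{thm1} to obtain a uniform systole bound on $M \setminus L$ in terms of $V = \vol(M)$, match this against an arithmetic systole lower bound for a principal congruence subgroup $\Gamma(I)$, and then use volume estimates to control the underlying imaginary quadratic field.

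Theorem~\ref{thm1} gives $\sys(M \setminus L) \le B := \log\!\bigl(\bigl(\sqrt{2}(C_0V)^{2/3} + 4\pi^2\bigr)^2 + 8\bigr)$. Assuming $\pi_1(M \setminus L) \cong \Gamma(I) \subseteq \psl_2(\mathcal{O}_d)$, Mostow rigidity identifies $M \setminus L$ with $\HH^3/\Gamma(I)$, so the same bound applies to $\sys(\HH^3/\Gamma(I))$. For the arithmetic lower bound, take any loxodromic $\gamma \in \Gamma(I)$ and lift it to $\tilde\gamma = \mathrm{Id} + A \in \mathrm{SL}_2(\mathcal{O}_d)$ with $A \in M_2(I)$. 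The determinant identity $\det\tilde\gamma = 1$ forces $\tr A = -\det A$, and since every entry of $A$ lies in $I$ we have $\det A \in I^2$, so $\tr\tilde\gamma - 2 \in I^2$. As $\gamma$ is loxodromic this element is nonzero, and in the imaginary quadratic order $\mathcal{O}_d$ any nonzero $\alpha$ in an ideal $J$ satisfies $N(J) \mid |\alpha|^2$. Hence $|\tr\tilde\gamma - 2| \ge \sqrt{N(I^2)} = N(I)$, and $|\tr\tilde\gamma| \ge N(I) - 2$. The trace--length relation in Lemma~\ref{lengthbound1} now gives $\sys(\HH^3/\Gamma(I)) \ge 2\log((N(I) - 2)/2)$; combined with the upper bound, $N(I) \le 2 + 2e^{B/2}$, a bound depending only on $V$.

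For each fixed $d$ only finitely many ideals $I \subset \mathcal{O}_d$ meet this norm bound, so it remains to bound $d$. I would do so using Humbert's formula $\vol(\HH^3/\psl_2(\mathcal{O}_d)) \gtrsim d^{3/2}$, which forces $\vol(\HH^3/\Gamma(I)) \to \infty$ as $d \to \infty$, together with the Futer--Kalfagianni--Purcell inequality (Theorem~\ref{fkpbound}) applied to the link complement. Mirroring the case analysis of Proposition~\ref{traceprop}, if every cusp of $\HH^3/\Gamma(I)$ has waist greater than $2\pi$---which for the cusp at $\infty$ follows from $\mu(I) \ge \sqrt{N(I)} > 2\pi$ once $N(I) > 4\pi^2$---then FKP gives $\vol(\HH^3/\Gamma(I)) \le V/(1 - (2\pi/\ell_{\min})^2)^{3/2}$, and together with Humbert this bounds $d$. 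In the remaining case, some non-principal cusp class $[\mathfrak{A}]$ produces a waist at most $2\pi$, forcing $\sys(\HH^3/\Gamma(I)) \le 7.35534\ldots$ by Adams--Reid and pinning down $N(\mathfrak{A})$, after which Minkowski-type bounds on class-group representatives combined with the lower bound on the maximal cusp volume at $\infty$ (proportional to $N(I)^2 \sqrt{d}$) bound $d$.

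The main obstacle is this final $d$-bound: the systole bound alone does not control $d$---for instance, $\Gamma(\mathfrak{p})$ for primes $\mathfrak{p}$ of fixed small norm in varying $\mathcal{O}_d$ has bounded systole while $d \to \infty$---so the fact that $\HH^3/\Gamma(I)$ arises as a link complement in the fixed $M$ must be used crucially, through the FKP inequality and the arithmetic shapes of the cusp lattices rather than through Theorem~\ref{thm1} alone.
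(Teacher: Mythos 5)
Your first step --- combining the systole upper bound from Theorem~\ref{thm1} with the observation that a loxodromic $\gamma \in \Gamma(I)$ has $\tr\gamma - 2 \in I$ (you even sharpen this to $I^2$), hence $|I|$-norm bounded, hence only finitely many ideals $I$ for each fixed $d$ --- is essentially the paper's argument for the second half of the proof, and it is correct.

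The gap is exactly where you flagged it: bounding $d$. Your proposed route via Humbert plus FKP does not close. The Futer--Kalfagianni--Purcell inequality gives $\vol(M \setminus L) \le V\bigl(1 - (2\pi/\ell_{\min})^2\bigr)^{-3/2}$ only when all filling slopes have length $>2\pi$, and as $\ell_{\min} \to 2\pi^+$ the right-hand side is unbounded. Nothing forces $\ell_{\min}$ away from $2\pi$: the systole bound you already have is perfectly consistent with $\ell_{\min}$ arbitrarily close to $2\pi$, and in that regime $\vol(M\setminus L)$ can be arbitrarily large. Indeed $\vol(M \setminus L)$ genuinely is not bounded in terms of $\vol(M)$ (take increasingly complicated links in a fixed $M$), so any argument that tries to bound $d$ by first bounding $\vol(\HH^3/\Gamma(I))$ is doomed. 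The case with a short waist (where Adams--Reid applies) gives a systole bound but again no volume bound, so the ``Minkowski-type bounds on class-group representatives'' step does not get off the ground either.

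The paper uses a qualitatively different --- and topological rather than geometric --- input here. Writing $\bar N$ for the compact core of $M\setminus L$ with $n$ boundary tori, one computes from the long exact sequence of $(\bar N, \partial\bar N)$ together with Poincar\'e--Lefschetz duality that the degree-one cuspidal cohomology of $\bar N$ has dimension $\dim H_2(\bar N,\partial\bar N;\bQ) - n$; then excision and the long exact sequence of the pair $(M,L)$ give $\dim H_2(\bar N,\partial\bar N;\bQ) \le \dim H_2(M;\bQ) + n$, so the cuspidal cohomology dimension is at most $\dim H_2(M;\bQ)$ --- a bound depending only on $M$, not on $L$. The theorem of Grunewald--Schwermer then says that only finitely many discriminants $d$ admit a congruence subgroup of $\psl_2(\mathcal{O}_d)$ with degree-one cuspidal cohomology of bounded dimension. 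That gives finiteness of $d$, after which your norm argument finishes. The lesson is that the constraint ``$M\setminus L$ sits inside the fixed closed manifold $M$'' is being exploited through $H_2(M;\bQ)$, not through $\vol(M)$.
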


\begin{proof}
Recall that for any link $L$, $M \setminus L$ is the interior of a compact manifold $\bar N$ with boundary $\partial \bar N$ a disjoint union of $n$ tori, where $n = |L|$, the number of components of $L$.  The {\em cuspidal cohomology} of $\bar N$ is the image of the map $H^*(\bar N,\partial \bar N;\bQ) \to H^*(\bar N;\bQ)$ from the long exact sequence of the pair $(\bar N,\partial \bar N)$.  From the long exact sequence, Poincar\'e-Lefschetz duality, and Euler characteristic considerations, the dimension of the degree $1$ cuspidal cohomology is equal to $dim(H^1(\bar N ; \bQ)) - n = dim(H_2(\bar N,\partial \bar N;\bQ)) - n$.

On the other hand, we can consider the long exact sequence in homology of the pair $(M,L)$.  From this and excision, we have $dim(H_2(\bar N, \partial \bar N; \bQ)) = dim(H_2(M,L;\bQ))  \leq dim(H_2(M;\bQ)) + n$.  Therefore, the dimension of the degree $1$ cuspidal cohomology of $\bar N$ is at most $dim(H_2(M;\bQ))$.

Now suppose there exists a sequence of links $L_k \subset M$ so that $M \setminus L_k \cong \HH^3/\Gamma_k$, where $\Gamma(I_k) < \psl_2(\mathcal O_{d_k})$, is an infinite sequence of congruence subgroups of Bianchi groups.  According to a result of Grunewald-Schwermer \cite{GS}, because the dimension of the degree $1$ cuspidal cohomology is bounded, the set $\{d_k\}_{k=1}^\infty$ is a finite set.  After passing to a subsequence, we may assume $d_k = d$, for all $k$.  Therefore, $\Gamma(I_k) < \psl_2(\mathcal O_d)$ for all $k$.

Theorem \ref{thm1} gives an upper bound $t_M$ for the systole length of $M \setminus L_k$, depending on the volume $V = vol(M)$. This in turn gives an upper bound on the complex modulus of the trace of a minimal loxodromic element of $\pi_1(M \setminus L_k)$. This trace is equal to $2+x_k$ for some $0 \neq x \in I_k$, and so there is an upper bound on the complex modulus of $x_k$. Since $\mathcal{O}_k$ is a Dedekind domain, there are only finitely many ideals of bounded norm in $\mathcal{O}_d$; hence, there are only finitely many ideals of $\mathcal{O}_k$ which contain an element $x_k$ of bounded norm, and hence of bounded complex modulus. Therefore, only finitely many principal congruence subgroups $\Gamma(I_k) < \mathcal{O}_d$ possess a systole no longer than $t_M$.
\end{proof}

\noindent
{\bf Remark.} In fact, one may conclude that for any $V> 0$, among all closed orientable manifolds with $\vol(M) < V$ there are at most finitely many link complements $M \setminus L$ which are homeomorphic to the quotient of $\mathbb H^3$ by a principal congruence subgroup of a Bianchi group.

\bibliographystyle{amsplain}
\bibliography{systolerefs}

\noindent Department of Mathematics,\\
University of Illinois at Urbana-Champaign,\\
1409 W. Green St,\\
Urbana, IL 61801.\\
Email: lakeland@illinois.edu

\noindent Department of Mathematics,\\
University of Illinois at Urbana-Champaign,\\
1409 W. Green St,\\
Urbana, IL 61801.\\
Email: clein@math.uiuc.edu

\end{document}